\newcommand{\be}{\begin{equation}}
\newcommand{\ee}{\end{equation}}
\newcommand{\R}{\mathbb{R}} % reelle Zahlen
\newcommand{\N}{\mathbb{N}} % natuerliche Zahlen
\newcommand{\dist}{\textnormal{dist}} % dist ...
\newcommand{\diam}{\textnormal{diam}} % diam ...
\newcommand{\essinf}{\textnormal{inf}} % essinf ...
\newcommand{\Ds}{(-\Delta)^s}
\newcommand{\cE}{{\mathcal E}}
\newcommand{\cH}{{\mathcal H}}
\newcommand{\cV}{{\mathcal V}}
\renewcommand{\O }{\Omega }
\newcommand{\ov}{\overline}
\newtheorem{theorem}{Theorem}[section]
\newtheorem{lemma}[theorem]{Lemma}
\newtheorem{proposition}[theorem]{Proposition}
\theoremstyle{remark}
\newtheorem{remark}[theorem]{Remark}
\theoremstyle{definition}
\numberwithin{equation}{section}
\title{Symmetry of odd solutions to equations with fractional Laplacian}
\author{
	Sidy M. Djitte\footnote{\textit{Adress:} Institut f\"ur Mathematik, Goethe-Universit\"at Frankfurt a.M., Robert-Mayer-Str. 10, 60325 Frankfurt a.M., Germany and African Institute for Mathematical Sciences in Senegal (AIMS Senegal), KM 2, Route de Joal, B.P. 14 18. Mbour, Senegal.\newline \textit{Email addresses:\ }\ \texttt{djitte@math.uni-frankfurt.de, sidy.m.djitte@aims-senegal.org}}
	 \ and \ 
	Sven Jarohs\footnote{\textit{Adress:} Institut f\"ur Mathematik, Goethe-Universit\"at Frankfurt a.M., Robert-Mayer-Str. 10, 60325 Frankfurt a.M., Germany.\newline \textit{Email address:\ }\ \texttt{jarohs@math.uni-frankfurt.de}.}
}
\date{\today}
\begin{document}
\maketitle

%{\let\thefootnote\relax\footnote{\textit{MSC2020}:   }
%\footnote{\textit{Keywords}: }
%\addtocounter{footnote}{-2}}
\pdfbookmark[1]{Abstract}{Abstract}
\begin{abstract}
We present a symmetry result to solutions of equations involving the fractional Laplacian in a domain with at least two perpendicular symmetries. We show that if the solution is continuous, bounded, and odd in one direction such that it has a fixed sign on one side, then it will be symmetric in the perpendicular direction. Moreover, the solution will be monotonic in the part where it is of fixed sign. In addition, we present also a class of examples in which our result can be applied. 
\end{abstract}

\bigskip

\setcounter{section}{1}

In the following, we study symmetries of odd solutions to the nonlinear problem
\begin{equation}\label{main-prob}
\left\{\begin{aligned} (-\Delta)^su&=f(x,u)&&\text{in $\Omega$}\\
u&=0 &&\text{in $\R^N\setminus \Omega$}\end{aligned}\right.
\end{equation}
where $\Omega\subset \R^N$ is an open set, $f\in C(\Omega\times \R)$, and $(-\Delta)^s$, $s\in(0,1)$ is the fractional Laplacian given for $\phi\in C^{\infty}_c(\R^N)$ by
\[
(-\Delta)^s\phi(x)=\frac{c_{N,s}}{2}\int_{\R^N}\frac{2\phi(x)-\phi(x+y)-\phi(x-y)}{|y|^{N+2s}}\ dy,
\]
with a normalization constant $c_{N,s}>0$. 

Symmetries of nonnegative solutions to problem \eqref{main-prob} have been studied in detail by various authors (see \cite{BLW05,CFY13,FW13-2,JW14,JW16}), where $f$ satisfies some monotonicity and symmetry in $x_1$ and $\Omega$ is symmetric in $x_1$. Here, we aim at investigating \eqref{main-prob}, where $\Omega$ has two perpendicular symmetries and the solution $u$ is odd in one of these directions. To give a precise framework of our statements, we assume the following:

\begin{enumerate}
\item[(D)] $\Omega\subset \R^N$ with $N\in \N$, $N\geq 2$ is open and bounded and, moreover, convex and symmetric in the directions $x_1$ and $x_N$. That is, for every $(x_1,\ldots,x_N)\in \Omega$, $t,\tau\in[-1,1]$ we have $(tx_1,x_2\ldots,x_{N-1},\tau x_N)\in \Omega$.
\item[(F1)] $f\in C(\Omega\times \R)$ and for every bounded set $K\subset \R$ there is $L=L(K)>0$ such that
\[
\sup_{x\in \Omega} |f(x,u)-f(x,v)|\leq L|u-v|\quad\text{for all $u,v\in K$.}
\]
\item[(F2)] $f$ is symmetric in $x_1$ and monotone in $|x_1|$. That is, for every $u\in \R$, $x\in \Omega$, and $t\in[-1,1]$ we have $f(tx_1,x_2,\ldots,x_N,u)\geq f(x,u)$.
\end{enumerate}

In this work, we consider \textit{weak} solutions of \eqref{main-prob}, i.e., $u\in \cH^s_0(\Omega):=\{v\in H^s(\R^N)\;:\; u=0 \text{ on $\R^{N}\setminus \Omega$}\}$ is called a (weak) solution of \eqref{main-prob}, if
$$
\cE_s(u,v)=\int_{\Omega} f(x,u(x)) v(x)\ dx\quad\text{for all $v\in \cH^s_0(\Omega)$,}
$$
whenever the right-hand side exists, where
\begin{equation}\label{bilinearform}
\cE_s(u,v)=\frac{c_{N,s}}{2}\int_{\R^N}\int_{\R^N}\frac{(u(x)-u(y))(v(x)-v(y))}{|x-y|^{N+2s}}\ dxdy
\end{equation}
is the bilinearform associated to $(-\Delta)^s$. Here, $H^s(\R^N)=\{u\in L^2(\R^N)\;:\; \cE_s(u,u)<\infty\}$ is the usual fractional Hilbert space of order $s$ (see e.g. \cite{DPV,BV15}).

Denote $e_i:=(\delta_{ij})_{1\leq j\leq N}\in \R^N$, where $\delta_{ij}=1$ if $j=i$ and $0$ otherwise is the usual Kronecker Delta. Moreover, for $\lambda\in\R$, consider the halfspace 
\begin{equation}\label{halfspace}
H_{i,\lambda}:=\{x\in \R^N\;:\; x\cdot e_i>\lambda\}=\{x\in \R^N\;:\; x_i>\lambda\}
\end{equation} 
and denote by 
\begin{equation}\label{reflection}
r_{i,\lambda}:\R^N\to\R^N, \quad r_{i,\lambda}(x)=2(\lambda-x\cdot e_i)e_i+x
\end{equation}
the reflection of $x$ at $\partial H_{i,\lambda}(\lambda)$. Note that $r_{1,0}(\Omega)=r_{N,0}(\Omega)=\Omega$, if assumption (D) is satisfied.

We call $u:\R^N\to\R$ \textit{symmetric with respect to $H_{i,\lambda}$}, if $u\circ r_{i,\lambda}=u$ and we call $u$ \textit{antisymmetric with respect to $H_{i,\lambda}$}, if $u\circ r_{i,\lambda}=-u$.

\begin{theorem}\label{main-thm1}
Let $\Omega\subset \R^N$ satisfy (D), $f\in C(\Omega\times\R)$ satisfy (F1) and (F2), and let $u\in \cH^s_0(\Omega)$ be a continuous bounded solution of \eqref{main-prob}, which is antisymmetric with respect to $H_{N,0}$ and $u\geq 0$ in $H_{N,0}\cap \Omega$. Then $u$ is symmetric with respect to $H_{1,0}$ and either $u\equiv 0$ in $\Omega$ or $u|_{\Omega\cap H_{1,0}\cap H_{N,0}}$ is strictly decreasing in $x_1$, that is, for every $x,y\in \Omega\cap H_{1,0}\cap H_{N,0}$ with $x_1<y_1$ we have $u(x)>u(y)$.
\end{theorem}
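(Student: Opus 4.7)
The plan is to run a moving planes argument in the $x_1$-direction, working throughout on the single quadrant $Q_\lambda:=H_{1,\lambda}\cap H_{N,0}\cap \Omega$. Set $w_\lambda(x):=u(r_{1,\lambda}(x))-u(x)$ for $\lambda\ge 0$. Since $r_{1,\lambda}$ and $r_{N,0}$ commute and $u$ is antisymmetric with respect to $H_{N,0}$, the function $w_\lambda$ is antisymmetric with respect to both $H_{1,\lambda}$ and $H_{N,0}$. Assumption (D), together with the convexity of $\Omega$, gives $r_{1,\lambda}(\Omega\cap H_{1,\lambda})\subset \Omega$ for every $\lambda\ge 0$, and the inequality $|(r_{1,\lambda}(x))_1|\le x_1$ on $H_{1,\lambda}$ together with (F2) yields $f(r_{1,\lambda}(x),t)\ge f(x,t)$ for all $t\in\R$.

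Splitting
\[
f(r_{1,\lambda}(x),u(r_{1,\lambda}(x)))-f(x,u(x)) = \bigl[f(r_{1,\lambda}(x),u(r_{1,\lambda}(x)))-f(r_{1,\lambda}(x),u(x))\bigr] + \bigl[f(r_{1,\lambda}(x),u(x))-f(x,u(x))\bigr],
\]
the Lipschitz estimate (F1) produces a potential $c_\lambda$ with $\|c_\lambda\|_\infty\le L$, while (F2) makes the second bracket nonnegative; hence $(-\Delta)^s w_\lambda + c_\lambda\, w_\lambda\ge 0$ weakly on $\Omega\cap H_{1,\lambda}$. Outside $\Omega$, the facts that $u\equiv 0$ there and $u\ge 0$ on $\Omega\cap H_{N,0}$ give $w_\lambda\ge 0$ on $H_{1,\lambda}\cap H_{N,0}\setminus \Omega$. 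The crucial ingredient is a maximum principle for functions antisymmetric with respect to \textit{both} $H_{1,\lambda}$ and $H_{N,0}$, restricted to $Q_\lambda$: decomposing $\R^N$ into the four orthants bounded by these two hyperplanes and exploiting both antisymmetries, $(-\Delta)^s w_\lambda(x)$ at $x\in Q_\lambda$ rewrites as an integral over $Q_\lambda$ against a strictly positive kernel, plus a nonnegative zeroth-order coefficient times $w_\lambda(x)$. Combined with a small-volume (ABP-type) estimate for this operator, this yields $w_\lambda\ge 0$ on $Q_\lambda$ whenever $\lambda$ is sufficiently close to $\sup\{x_1:x\in\Omega\}$.

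I then set $\lambda_*:=\inf\{\mu\ge 0\,:\, w_\lambda\ge 0\text{ on }Q_\lambda\text{ for all }\lambda\ge \mu\}$ and argue by contradiction that $\lambda_*=0$. Assuming $\lambda_*>0$, continuity gives $w_{\lambda_*}\ge 0$ on $Q_{\lambda_*}$, and the strong maximum principle for doubly antisymmetric supersolutions yields the dichotomy $w_{\lambda_*}\equiv 0$ on $Q_{\lambda_*}$ or $w_{\lambda_*}>0$ in its interior. The first alternative, propagated by the two antisymmetries to all of $\R^N$, gives $u\circ r_{1,\lambda_*}=u$ on $\R^N$; combined with $u\equiv 0$ outside $\Omega$ and $\Omega\ne r_{1,\lambda_*}(\Omega)$, this forces $u\equiv 0$. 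In the second alternative one splits $Q_{\lambda_*-\eps}$ into a compact interior piece on which $w_{\lambda_*-\eps}>0$ by continuity, and a thin boundary layer on which the narrow-region principle from the previous paragraph applies, producing $w_{\lambda_*-\eps}\ge 0$ on $Q_{\lambda_*-\eps}$ for small $\eps>0$ and contradicting the minimality of $\lambda_*$. Hence $\lambda_*=0$, so $u\circ r_{1,0}\ge u$ on $Q_0$; applying the same procedure in the opposite direction gives the reverse inequality, and therefore symmetry with respect to $H_{1,0}$. The strong maximum principle applied to each $w_\lambda$ with $0<\lambda<\sup\{x_1:x\in\Omega\}$ finally forces $u\equiv 0$ or $w_\lambda>0$ in the interior of $Q_\lambda$, giving the strict monotonicity. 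The main technical obstacle is the doubly antisymmetric maximum principle and the identification of the correct positive kernel on $Q_\lambda$: it is the interaction of the nonlocality of $(-\Delta)^s$ with two perpendicular symmetries that distinguishes the present setting from the standard one-reflection moving planes and requires careful bookkeeping of the four-orthant decomposition.
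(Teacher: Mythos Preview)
Your proposal is correct and follows essentially the same route as the paper: a moving-plane argument in $x_1$ applied to the doubly antisymmetric function $w_\lambda=u\circ r_{1,\lambda}-u$, driven by a small-volume maximum principle and a strong maximum principle for doubly antisymmetric supersolutions on the quadrant $H_{1,\lambda}\cap H_{N,0}$ (the paper's Propositions~\ref{lem:0.2} and~\ref{Hopf-lemma-doubly-anti}). Your four-orthant kernel decomposition is exactly the content of the paper's Lemma~\ref{test-function} and claim~\eqref{claim-double}; the only cosmetic differences are that the paper splits the nonlinearity as $f(x,u(r_{1,\lambda}x))-f(x,u(x))$ plus the (F2)-term (rather than with $f(r_{1,\lambda}x,\cdot)$ in the Lipschitz quotient) and is somewhat terser at $\lambda_*=0$, where your explicit ``run the argument from the left'' step is implicit.
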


We note that Theorem \ref{main-thm1} is not surprising in the local case, where $(-\Delta)^s$ is considered with $s=1$, if we have $u>0$ in $H_{N,0}\cap \Omega$. In this case, the conclusion follows by simply considering the solution restricted to its part of nonnegativity and apply the usual symmetry result due to \cite{GNN79}. We emphasize however, that if this positivity assumption is reduced to a nonnegativity assumption, then in general the claimed monotonicity is not true in the local case and presents a purely nonlocal feature. Moreover, in the nonlocal setting, we are not able to simply restrict the solution to its set of nonnegativity. Due to this, we present in Section \ref{linear} below new maximum principles for \textit{doubly antisymmetric} functions to certain linear problems, which we believe are of independent interest.\\

Let us emphasize that if $u\in L^{\infty}(\R^N)$ is antisymmetric with respect to $H_{N,0}$, it follows that for any $x\in H_{N,0}$, such that $u$ is regular enough at $x$, we have with a change of variables
\[
(-\Delta)^su(x)=c_{N,s}\lim_{\epsilon\to0^+}\int_{H_{N,0}\setminus B_{\epsilon}(x)}(u(x)-u(y))\Big(\frac{1}{|x-y|^{N+2s}}-\frac{1}{|x-r_{N,0}(y)|^{N+2s}}\Big)\ dy=(-\Delta|_{H_{N,0}})^su(x),
\]
where $(-\Delta|_{H_{N,0}})^s$ denotes the so-called spectral fractional Laplacian (c.f. \cite{CT10} for $s=1/2$). In particular, this \textit{difference of the kernel function} does not meet the assumptions needed to conclude the symmetry result by a restriction to $H_{N,0}$ and applying statements of \cite{JW16}.

In the particular case, where $\Omega=B_1(0)$ is the unitary ball, it was shown in \cite{FFTW20} that the second eigenfunction of the fractional Laplacian in $B_1(0)$, denoted by $\phi_2$, is odd and can be chosen to be positive in $\{x_N>0\}$. Due to the regularity of $\phi_2$, Theorem \ref{main-thm1} yields that for $i=1,\ldots,N-1$ we have
\begin{enumerate}
\item $\phi_2$ is symmetric with respect to $H_{i,0}$ (see also \cite{J16}) and 
\item $\phi_2|_{\{x_1>0\}}$ is decreasing in $x_i>0$.
\end{enumerate}
We emphasize that such a statement already follows due to \cite{FFTW20} combined with \cite{D-al}, since thus the second eigenfunction can be written as a product of the first eigenfunction with a homogeneous function.
\medskip

To give a more generalized application of our results to a class of nonlinear problems, we consider for $1<p<\frac{2N}{N-2s}$ the minimization problem
\be\label{eq-1.5}
\lambda_{1,p}(\Omega):=\min_{\substack{u\in \cH^s_0(\Omega)\\ u\neq 0}}\frac{\cE_s(u,u)}{\Big(\int_{\Omega}|u(x)|^p\ dx\Big)^{2/p}}.
\ee
Clearly, the minimizer exists and is a solution of \eqref{main-prob} with $f(x,u)=|u|^{p-2}u$ (see e.g. \cite{DPV,SV12,SV13}) and, since $\cE_s(|u|,|u|)\leq \cE_s(u,u)$ it can be chosen to be positive. For more information about the minimization problem \eqref{eq-1.5} we refer to \cite{FE}. In the local case $s=1$, this is a well known problem, see e.g. \cite{FL, BK}. If $\Omega$ satisfies (D), then it follows that this minimizer is also symmetric with respect to the symmetries of $\Omega$ (see \cite{JW14}). In this case, we can also consider the minimizer in the set of $\cH^s_0(\Omega)$-functions, which satisfy $u=-u\circ r_{N,0}$, that is
\be\label{eq-1.6}
\lambda_{1,p}^-(\Omega):=\min_{\substack{u\in \cH^s_0(\Omega)\\ u\neq 0\\ u=-u\circ r_{N,0}}}\frac{\cE_s(u,u)}{\Big(\int_{\Omega}|u(x)|^p\ dx\Big)^{2/p}}.
\ee
In the next theorem, we prove that minimizers of \eqref{eq-1.6} have constant sign in $\Omega\cap H_{N,0}$ and in the particular case $p=2$ we also prove a simplicity result for $\lambda_{1,p}^-(\Omega)$.
%{\bf add more statements on the $p$-eigenvalues}: Could be interesting to mention the simplicity of $\lambda_{1,2}^-(\Omega)$!!! 

\begin{theorem}\label{main-thm2}
Let $1<p<\frac{2N}{N-2s}$ with $N\geq 2$ and let $\Omega\subset \R^N$ satisfy (D) with $\partial\Omega$ of class $C^{1,1}$. Then there is a nontrivial solution $u\in \cH^s_0(\Omega)$ of 
\begin{equation}\label{main-prob2}
\left\{\begin{aligned} (-\Delta)^su&=\lambda_{1,p}^-(\Omega)|u|^{p-2}u&&\text{in $\Omega$}\\
u&=0 &&\text{in $\R^N\setminus \Omega$}\end{aligned}\right.
\end{equation}
 which is continuous, bounded, and antisymmetric with respect to $H_{N,0}$. Moreover, $u$ is of one sign in $\Omega\cap H_{N,0}$ and hence it is symmetric with respect to $H_{1,0}$ and $u|_{\Omega\cap H_{1,0}\cap H_{N,0}}$ is strictly decreasing in $x_1$. In particular, $u$ can be chosen to be positive in $H_N\cap\Omega$. Furthermore, if $p=2$, then the minimizer is unique up to a sign.
\end{theorem}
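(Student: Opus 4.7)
My plan has four steps: existence of a minimiser, regularity, the sign property (which is the key new input), and uniqueness up to sign when $p=2$.

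\emph{Existence and regularity.} I minimise the Rayleigh quotient over the closed subspace $X:=\{u\in\cH^s_0(\Omega):u=-u\circ r_{N,0}\}$ of antisymmetric functions. Since $p<\tfrac{2N}{N-2s}$, the embedding $\cH^s_0(\Omega)\embed L^p(\Omega)$ is compact, so a minimising sequence normalised in $L^p$ converges (along a subsequence) weakly in $X$ and strongly in $L^p$; the weak limit is antisymmetric and attains the infimum by lower semicontinuity of $\cE_s$. A Lagrange multiplier argument then shows it solves \eqref{main-prob2}. A Moser-type iteration (using $p<2^{*}_{s}$) gives $u\in L^{\infty}(\Omega)$, after which the bounded right-hand side allows standard interior and boundary regularity for the fractional Laplacian (using $\partial\Omega\in C^{1,1}$) to yield continuity on $\overline{\Omega}$.

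\emph{Sign property on $\Omega\cap H_{N,0}$.} Set $H:=H_{N,0}$, $r:=r_{N,0}$, $w:=u|_H$. Splitting $\cE_s(u,u)$ according to whether $x,y$ lie in $H$ or $H^c$, using antisymmetry, and the change of variables $z\mapsto rz$ gives the decomposition
\begin{equation*}
\cE_s(u,u)=c_{N,s}\int_H\!\!\int_H\Bigl[\tfrac{(w(x)-w(y))^2}{|x-y|^{N+2s}}+\tfrac{(w(x)+w(y))^2}{|x-ry|^{N+2s}}\Bigr]\,dx\,dy.
\end{equation*}
Let $v\in X$ be the antisymmetric extension of $|w|$, so $\|v\|_p=\|u\|_p$. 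The pointwise identity $(a-b)^2-(|a|-|b|)^2=2(|ab|-ab)=(|a|+|b|)^2-(a+b)^2$ applied with $a=w(x)$, $b=w(y)$ yields
\begin{equation*}
\cE_s(u,u)-\cE_s(v,v)=2c_{N,s}\int_H\!\!\int_H\bigl(|w(x)w(y)|-w(x)w(y)\bigr)\Bigl[\tfrac{1}{|x-y|^{N+2s}}-\tfrac{1}{|x-ry|^{N+2s}}\Bigr]\,dx\,dy\ge 0.
\end{equation*}
Both factors are nonnegative on $H\times H$; if $w$ changed sign in $\Omega\cap H$, then $\{w>0\}$ and $\{w<0\}$ would each contain an open subset of $\Omega\cap H$, and on the product of these subsets the integrand would be strictly positive (since $|x-y|<|x-ry|$ whenever $x_N,y_N>0$), giving $\cE_s(v,v)<\cE_s(u,u)$ and contradicting minimality. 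Hence $u$ has fixed sign in $\Omega\cap H$; after replacing $u$ by $-u$, $u\ge 0$ there. Symmetry in $H_{1,0}$ and strict monotonicity in $x_1$ on $\Omega\cap H_{1,0}\cap H_{N,0}$ now follow immediately from Theorem~\ref{main-thm1} applied with $f(x,u)=\lambda_{1,p}^-(\Omega)|u|^{p-2}u$, which trivially satisfies (F1) and (F2).

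\emph{Uniqueness for $p=2$.} Every nontrivial element of the $\lambda_{1,2}^-(\Omega)$-eigenspace inside $X$ minimises the Rayleigh quotient, hence by the previous step has fixed sign on $\Omega\cap H$, and is in fact strictly of one sign there by the strong maximum principle for the reduced operator on $H$ with positive kernel $|x-y|^{-N-2s}-|x-ry|^{-N-2s}$ from Section~\ref{linear}. Given two linearly independent such eigenfunctions $\phi_1,\phi_2$ with $\phi_1>0$ on $\Omega\cap H$, set $\psi:=-\langle\phi_1,\phi_2\rangle_{L^2}\phi_1+\|\phi_1\|_{L^2}^2\phi_2$, which lies in the same eigenspace, is nontrivial by linear independence, and is $L^2$-orthogonal to $\phi_1$. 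After a sign change if necessary, $\psi>0$ on $\Omega\cap H$; by antisymmetry $\phi_1,\psi<0$ on $\Omega\setminus H$, so $\phi_1\psi>0$ almost everywhere on $\Omega$, forcing $\int_\Omega\phi_1\psi>0$ and contradicting orthogonality. Thus the eigenspace is one-dimensional. The hardest part of the whole argument is the bilinear decomposition and the ensuing sign of $\cE_s(u,u)-\cE_s(v,v)$; once these are in hand the remaining steps are essentially routine or rely on the antisymmetric maximum principles developed in Section~\ref{linear}.
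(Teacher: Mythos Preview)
Your argument is correct and follows the same strategy as the paper: show that any minimizer has fixed sign in $\Omega\cap H_{N,0}$ by comparing $\cE_s(u,u)$ with $\cE_s(v,v)$ for $v$ the antisymmetric extension of $|u|_{H}|$, and then invoke Theorem~\ref{main-thm1}. Your half-space representation of $\cE_s$ for antisymmetric functions together with the identity $(a-b)^2-(|a|-|b|)^2=(|a|+|b|)^2-(a+b)^2$ gives the energy difference in one line, whereas the paper reaches the same integral over $\{u>0\}\times\{u\le 0\}$ by a longer direct splitting of $\Omega$ into four pieces; for uniqueness at $p=2$ the paper argues that two distinct normalized sign-definite minimizers would force $u-v$ to change sign, while you use the equivalent orthogonality trick---both are standard simplicity arguments.
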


The existence, as mentioned above, follows immediately from a minimization problem. Moreover, by the known regularity theory it follows that indeed we have $u\in C^{\infty}(\Omega)\cap C^s(\R^N)$, see e.g. \cite{RS14,G15:2}. We show here that this minimizer can actually be chosen to be nonnegative in $\Omega\cap H_N$ and thus the conclusion follows from Theorem \ref{main-thm1}.\\

This work is organized as follows. In Section \ref{linear} we give the framework for supersolutions and maximum principles used later on. Section \ref{symmetry} is devoted to prove Theorem \ref{main-thm1} and in Section \ref{application} we show Theorem \ref{main-thm2}.

\paragraph{Notation} The following notation is used. For subsets $D,U\subset \R^N$ we write $\dist(D,U):=\inf\{|x-y| \;:\; x\in D,\ y\in U\}$.
If $D =\{x\}$ is a singleton, we write $\dist(x,U)$ in place of $\dist({x},U)$. For $U\subset \R^N$ and $ r> 0$
we consider $B_r(U) :=\{x \in \R^N\;:\; \dist(x,U) < r\}$, and we let, as usual $B_r(x) = B_r (\{x\})$ be the
open ball in $\R^N$ centered at $x\in \R^N$ with radius $r > 0$. For any subset $M \subset \R^N$, we denote
by $1_M : \R^N\to\R$ the characteristic function of $M$ and by $\diam(M)$ the diameter of $M$. If $M$ is
measurable, $|M|$ denotes the Lebesgue measure of $M$. Moreover, if $w : M \to \R$ is a function,
we let $w^+= \max\{w,0\}$ resp. $w^- = -\min\{w,0\}$ denote the positive and negative part of w,
respectively, so that $w=w^+-w^-$. Finally, $H_{i,\lambda}$ and $r_{i,\lambda}$ are as defined in \eqref{halfspace} and resp. \eqref{reflection} for $i\in\{1,\ldots,N\}$ and $\lambda\in \R$. Finally, $\Omega\subset \R^N$ is always an open set satisfying (D).

\section{A linear problem} \label{linear}

For this section, we fix $\lambda,\mu\in \R$ and denote $H_1:=H_{1,\mu}$ and $H_2:=H_{N,\lambda}$. Similarly, $r_1:=r_{1,\mu}$ and $r_2:=r_{N,\lambda}$. We call $w:\R^N\to\R$ \textit{doubly antisymmetric (with respect to $H_{1}$ and $H_{2}$)}, if
\[
w\circ r_i=-w\quad\text{in $\R^N$ for $i=1,2$.}
\]
Moreover, if $U\subset \R^N$ is open, we let 
\[
\cV^s(U)=\Big\{u\in L^2_{loc}(\R^N)\;:\; \rho_s(w,U):=\int_{U}\int_{\R^N}\frac{(w(x)-w(y))^2}{|x-y|^{N+2s}}\ dxdy<\infty\Big\}.
\]
Note that clearly $\cH^s_0(U)\subset H^s(\R^N)\subset \cV^s(\R^N)\subset \cV^s(U)$. In the following Lemma we collect some statements corresponding to the space $\cV^s(U)$. The proofs can be found e.g. in \cite{JW16,JW19,JW20}.

\begin{lemma}\label{properties vs}
Let $U\subset \R^N$ open and bounded. Then the following hold.
\begin{enumerate}
\item $\cE_s$ is well defined on $\cV^s(U)\times \cH^s_0(U)$. 
\item If $w\in \cV^s(U)$, then also $w^{\pm},|w|\in \cV^s(U)$. Moreover, if $w\geq 0$ in $\R^N\setminus U$, then $w^-\in\cH^s_0(U)$ and we have 
$$
\cE_s(w^-,w^-)\leq -\cE_s(w,w^-).
$$
\item\label{item3} Let $i=1$ or $i=2$ and $U\subset H_i$. If $w\in \cV^s(U)$ is antisymmetric in $x_i$, then $w1_{H_i}\in \cV^s(U)$. Moreover, if $w\geq 0$ in $H_i\setminus U$, then $w^{-}1_{H_i}\in \cH^s_0(U)$ and $\cE_s(w^-1_{H_i},w^-1_{H_i})\leq -\cE_s(w,w^-1_{H_i})$.
\end{enumerate}
\end{lemma}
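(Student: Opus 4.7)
The plan is to handle the three items in order, using three recurring ingredients: Cauchy--Schwarz for the symmetric bilinear form, pointwise inequalities for $w^{\pm}$ and $|w|$, and a reflection/distance argument exploiting the antisymmetry $w\circ r_i=-w$ together with the elementary fact that $|x-r_i(y)|\ge|x-y|$ whenever $x,y$ lie on the same side of $\partial H_i$. For (1), I would split $\R^N\times\R^N$ into the three disjoint pieces $U\times\R^N$, $(\R^N\setminus U)\times U$, and $(\R^N\setminus U)\times(\R^N\setminus U)$. On the last piece $v\equiv 0$ so the integrand vanishes, while on the first Cauchy--Schwarz yields
\[
\Big|\int_U\int_{\R^N}\frac{(u(x)-u(y))(v(x)-v(y))}{|x-y|^{N+2s}}\,dx\,dy\Big|\le\rho_s(u,U)^{1/2}\rho_s(v,U)^{1/2},
\]
and the middle piece is identical up to swapping $x\leftrightarrow y$; both bounds are finite since $v\in\cH^s_0(U)\subset\cV^s(U)$.

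For (2), the pointwise inequality $|w^\pm(x)-w^\pm(y)|\le|w(x)-w(y)|$ (and the analogous one for $|w|$), verified by a case analysis on the signs of $w(x),w(y)$, immediately yields $\rho_s(w^\pm,U),\rho_s(|w|,U)\le\rho_s(w,U)$. If moreover $w\ge 0$ on $\R^N\setminus U$ then $w^-$ vanishes there, hence $w^-\in\cH^s_0(U)$ (using also $U$ bounded and $w\in L^2_{loc}$). The energy inequality reduces, via $w=w^+-w^-$, to $\cE_s(w^+,w^-)\le 0$; this holds because in each of the four sign configurations for $w(x),w(y)$ the integrand $(w^+(x)-w^+(y))(w^-(x)-w^-(y))$ either vanishes or equals $w(x)w(y)$ with $w(x),w(y)$ of opposite signs, i.e.\ is nonpositive.

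Item (3) is where the doubly antisymmetric structure enters. Setting $\tilde w:=w1_{H_i}$, for $x\in U\subset H_i$ the substitution $y\mapsto r_i(y)$ in the integral over $H_i^c$ yields
\[
\int_{\R^N}\frac{(\tilde w(x)-\tilde w(y))^2}{|x-y|^{N+2s}}\,dy=\int_{H_i}\frac{(w(x)-w(y))^2}{|x-y|^{N+2s}}\,dy+\int_{H_i}\frac{w(x)^2}{|x-r_i(y)|^{N+2s}}\,dy,
\]
which I would compare with the antisymmetric rewriting
\[
\int_{\R^N}\frac{(w(x)-w(y))^2}{|x-y|^{N+2s}}\,dy=\int_{H_i}\frac{(w(x)-w(y))^2}{|x-y|^{N+2s}}\,dy+\int_{H_i}\frac{(w(x)+w(y))^2}{|x-r_i(y)|^{N+2s}}\,dy
\]
obtained using $w\circ r_i=-w$. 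Combining these with the algebraic identity $2w(x)^2=(w(x)-w(y))^2+(w(x)+w(y))^2-2w(y)^2$ and the reflection distance bound $|x-r_i(y)|\ge|x-y|$ controls $\rho_s(\tilde w,U)$ in terms of $\rho_s(w,U)$, proving $\tilde w\in\cV^s(U)$. When $w\ge 0$ on $H_i\setminus U$, the function $w^-1_{H_i}$ vanishes on $\R^N\setminus U$ and hence lies in $\cH^s_0(U)$; the energy inequality then follows from a variant of the pointwise sign argument of (2), where the additional cross terms generated by the characteristic function $1_{H_i}$ carry the correct sign thanks again to the reflection distance bound. The main obstacle is precisely the analysis of the boundary term $\int_{H_i}w(x)^2/|x-r_i(y)|^{N+2s}\,dy$ appearing after restricting to $H_i$: the naive convex estimate $w(x)^2\le 2(w(x)-w(y))^2+2w(y)^2$ leaves a tail that cannot be absorbed by $\rho_s(w,U)$ without invoking both the antisymmetry (to re-express $w(y)^2$ on $H_i$ via its reflection) and the one-sided distance inequality, which together constitute the distinguishing \emph{doubly antisymmetric} input of the argument.
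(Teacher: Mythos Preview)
The paper does not actually prove this lemma; it simply cites \cite{JW16,JW19,JW20}. So there is no ``paper's own proof'' to compare against, and your outline must be judged on its own merits.

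Items (1) and (2) are standard and your treatment is correct.

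For item (3), two points deserve correction. First, a terminological slip: item (3) involves antisymmetry with respect to a \emph{single} hyperplane $H_i$, not double antisymmetry. The doubly antisymmetric extension is the content of the \emph{next} lemma (Lemma~\ref{test-function}), which genuinely uses both reflections. In (3) you only have $w\circ r_i=-w$, and that is all you need.

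Second, your closing paragraph identifies a difficulty that is not really there. Your own identity $2w(x)^2=(w(x)-w(y))^2+(w(x)+w(y))^2-2w(y)^2$ already gives, after dropping the nonpositive term $-2w(y)^2$,
\[
w(x)^2\le \tfrac12\big[(w(x)-w(y))^2+(w(x)+w(y))^2\big].
\]
Integrating this against $|x-r_i(y)|^{-N-2s}$ over $y\in H_i$ and using $|x-r_i(y)|\ge|x-y|$ on the first summand yields directly
\[
\int_{H_i}\frac{w(x)^2}{|x-r_i(y)|^{N+2s}}\,dy\le \tfrac12\int_{\R^N}\frac{(w(x)-w(y))^2}{|x-y|^{N+2s}}\,dy,
\]
via your antisymmetric rewriting. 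No ``tail'' $w(y)^2$ ever has to be absorbed. So $\rho_s(w1_{H_i},U)\le \tfrac32\,\rho_s(w,U)$ and you are done with the membership claim.

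The energy inequality in (3) is asserted rather than argued. It does follow, but not by a pure pointwise sign check: after splitting $\cE_s(w+\phi,\phi)$ with $\phi=w^-1_{H_i}$ over $H_i\times H_i$, $H_i^c\times H_i^c$, and the cross block, and substituting $y\mapsto r_i(y)$ in the cross block using $w\circ r_i=-w$, one obtains
\[
\frac{2}{c_{N,s}}\cE_s(w+\phi,\phi)=-2\!\int_{H_i\times H_i}\!w^+(x)w^-(y)\Big(\tfrac{1}{|x-y|^{N+2s}}-\tfrac{1}{|x-r_i(y)|^{N+2s}}\Big)dxdy-2\!\int_{H_i\times H_i}\!\frac{w^-(x)w^-(y)}{|x-r_i(y)|^{N+2s}}\,dxdy,
\]
which is $\le 0$ precisely because of the reflection distance bound. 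This is the computation your phrase ``the additional cross terms \ldots\ carry the correct sign'' is standing in for; it would strengthen the write-up to display it.
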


The following Lemma gives an extension of Lemma \ref{properties vs}.\ref{item3} to the case of doubly antisymmetric functions.

\begin{lemma}\label{test-function}
Let $U\subset H_1\cap H_2$ and $w\in \cV^s(U_{1,2})$ be doubly antisymmetric, where $U_{1,2}=U\cup r_1(U)\cup r_2(U)\cup r_1(r_2(U))$. Then $v=w^-1_{H_1}1_{H_2}-w^+1_{H_1^c}1_{H_2}\in \cH^s_0(U\cup r_1(U))$ and we have
\[
\cE_s(w,v)+\cE_s(v ,v)\leq0,
\]
where equality can only hold if $v\equiv 0$, that is, if $w\geq0$ in $H_1\cap H_2$.
\end{lemma}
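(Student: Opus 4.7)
The plan is to first verify $v \in \cH^s_0(U \cup r_1(U))$, then reduce the energy inequality to a manifestly signed integral by exploiting the two antisymmetries in succession. For the inclusion, $v$ vanishes outside $U \cup r_1(U)$: on $H_2^c$ by construction; on $(H_1 \cap H_2) \setminus U$ by the sign condition $w \geq 0$ there (which kills $w^-$); and on $(H_1^c \cap H_2) \setminus r_1(U)$ because the $H_1$-antisymmetry of $w$ transfers this into $w \leq 0$, killing $w^+$. The $H^s(\R^N)$-regularity then follows from the truncation and antisymmetry statements in Lemma~\ref{properties vs}.

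For the inequality, set $u := w + v$ and observe the crucial pointwise identity $u v \equiv 0$: on $H_1 \cap H_2$ we have $u = w^+$ and $v = w^-$; on $H_1^c \cap H_2$ we have $u = -w^-$ and $v = -w^+$; and $v \equiv 0$ off $H_2$. Since $w$ is antisymmetric in $H_1$, so is $u$ (using $w^+ \circ r_1 = w^-$), so the standard symmetrization across $H_1$ produces, with $K(x, y) := \tfrac{c_{N,s}}{2 |x-y|^{N+2s}}$,
\[
\cE_s(w,v) + \cE_s(v, v) = \cE_s(u, v) = -4 \int_{H_1}\!\!\int_{H_1} \bigl[K(x,y) - K(x, r_1 y)\bigr]\, u(x) v(y)\, dx\, dy,
\]
where the diagonal $uv$-terms have dropped out thanks to $u v \equiv 0$.

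It remains to show the last integral is nonnegative. Because $v \equiv 0$ on $H_2^c$, the $y$-variable lives in $Q_1 := H_1 \cap H_2$. Split the $x$-integration into $Q_1$ and $Q_4 := H_1 \cap H_2^c$; on $Q_4$ substitute $x \mapsto r_2(x) \in Q_1$, using $u \circ r_2 = -w$ together with the isometry identity $K(r_2 x, y) = K(x, r_2 y)$, and decompose $w = w^+ - w^-$. After collecting terms one arrives at the sum of two integrals over $Q_1 \times Q_1$: one pairing the four-point kernel $\kappa_{12}(x,y) := K(x, y) - K(x, r_1 y) - K(x, r_2 y) + K(x, r_1 r_2 y)$ with $w^+(x) w^-(y)$, the other pairing the two-point kernel $\kappa_{r_2}(x, y) := K(x, r_2 y) - K(x, r_1 r_2 y)$ with $w^-(x) w^-(y)$. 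Both kernels are strictly positive on $Q_1 \times Q_1$: $\kappa_{r_2}$ by the usual one-sided reflection inequality in $H_1$ applied at the point $r_2(y) \in H_1$, and $\kappa_{12}$ by the classical rearrangement/convexity estimate for double reflections (strict convexity of $t \mapsto t^{-(N+2s)/2}$). Both integrands are pointwise nonnegative, proving $\cE_s(u,v) \leq 0$, and the second integral vanishes only if $w^- \equiv 0$ a.e.\ on $Q_1$, i.e.\ $v \equiv 0$, settling the equality case. The main obstacle will be the bookkeeping of the kernel algebra across the four quadrants, with the essential nonlocal ingredient (distinguishing this from the one-sided statement in Lemma~\ref{properties vs}.\ref{item3}) being the strict positivity of the four-point kernel $\kappa_{12}$ on $Q_1 \times Q_1$.
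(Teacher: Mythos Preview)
Your argument is correct and arrives at exactly the same two-term identity as the paper, but by a noticeably cleaner route. The paper expands $\cE_s(w,v)+\cE_s(v,v)$ directly and performs a two-page sequence of splittings of $\R^N\times\R^N$ into products of quadrants, substitutions, and cancellations before reaching
\[
-4\!\int_{Q_1\times Q_1}\!\! w^-(x)w^-(y)\,\kappa_{r_2}(x,y)\,dx\,dy
\;-\;4\!\int_{Q_1\times Q_1}\!\! w^+(x)w^-(y)\,\kappa_{12}(x,y)\,dx\,dy.
\]
Your observation that $\cE_s(w,v)+\cE_s(v,v)=\cE_s(u,v)$ with $u=w+v$, together with the pointwise identity $uv\equiv 0$ and the $H_1$-antisymmetry of both $u$ and $v$, collapses most of that computation into the single line
\[
\cE_s(u,v)=-4\int_{H_1}\int_{H_1}\bigl[K(x,y)-K(x,r_1 y)\bigr]\,u(x)v(y)\,dx\,dy,
\]
after which one further reflection in $H_2$ suffices. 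The crucial nonlocal ingredient is the same in both proofs: positivity of the four-point kernel $\kappa_{12}$ on $Q_1\times Q_1$. The paper proves this as an explicit ``Claim'' by computing the gradient of
\[
(a,b)\mapsto 1+\Bigl(\tfrac{M}{M+a+b}\Bigr)^{\frac{N}{2}+s}-\Bigl(\tfrac{M}{M+a}\Bigr)^{\frac{N}{2}+s}-\Bigl(\tfrac{M}{M+b}\Bigr)^{\frac{N}{2}+s},
\]
whereas you recognise it as the second-difference (convexity) inequality $g(0)-g(a)-g(b)+g(a+b)\geq 0$ for $g(t)=(M+t)^{-(N/2+s)}$; these are equivalent. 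Your equality analysis via the $\kappa_{r_2}$-term is also the same as the paper's.

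One small point: for the membership $v\in\cH^s_0(U\cup r_1(U))$ you invoke ``the sign condition $w\geq 0$ on $(H_1\cap H_2)\setminus U$'', which is not among the lemma's stated hypotheses. The paper's proof likewise relies on this implicitly (through Lemma~\ref{properties vs}.\ref{item3}), and it is present in every application (the supersolution notion in \eqref{weak-problem-doubly} carries it), so this is a wording issue in the lemma rather than a defect in your argument.
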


\begin{proof}
First note that since $w$ is antisymmetric with respect to $H_i$, $i=1,2$, Lemma \ref{properties vs} and its proof imply $w_i:=w1_{H_i}\in \cV^s(U\cup r_j(U))$, $i,j=1,2$, $i\neq j$ and
\[
\rho_s(w_i,U\cup r_j(U))\leq \rho_s(w,U_{1,2})\quad\text{and}\quad \cE_s(w_i^-,w_i^-)\leq -\cE_s(w,w_i^-)\quad\text{for $i,j=1,2$, $i\neq j$.}
\]
Similarly, also $w_{2}$ is antisymmetric with respect to $H_1$ (resp. $w_1$ with respect to $H_2$) and thus also $w_{1,2}:=w_11_{H_2}=w_21_{H_1}\in \cV^s(U)$ with 
\[
\rho_s(w_{1,2},U)\leq \min\Big\{\rho_s(w_1,U\cup r_2(U)),\rho_s(w_2,U\cup r_1(U))\Big\}
\]
and it holds
\[
\cE_s(w_{1,2}^-,w_{1,2}^-)\leq -\max\Big\{\cE_s(w_1,w_{1,2}^-),\cE_s(w_2,w_{1,2}^-)\Big\}.
\]
Similarly, we also have $w_{r_1,2}=w_21_{H_1^c}\in \cV^s(r_1(U))$ with 
\[
\rho_s(w_{r_1,2},r_1(U))\leq \rho_s(w_2,U\cup r_1(U)).
\]
It thus follows that $v=w^-1_{H_1}1_{H_2}-w^+1_{H_1^c}1_{H_2}=w_{1,2}^--w_{r_1,2}^+\in \cH^s_0(U\cup r_1(U))$.
%\footnote{Note here, that we could also use the fact that $ w^+1_{H_1^c}1_{H_2}=1_{H_1^c}1_{H_2}w^-\circ r_1= 1_{H_2}[w^-1_{H_1}]\circ r_1$.}.$ 
Using the monotonicity of $|\cdot|$ and the antisymmetry of $w$ and denoting $r_{1,2}:=r_1\circ r_2=r_2\circ r_1$ we have by several rearrangements and substitutions
\begin{align}
&\frac{2}{c_{N,s}}\Big(\cE_s(w,v)+\cE_s(v,v)\Big)=\int_{H_2}\int_{H_2}\frac{[(w+v)(x)-(w+v)(y)][v(x)-v(y)]}{|x-y|^{N+2s}}\ dxdy+\int_{H_2^c}\int_{H_2^c}\ldots+2\int_{H_2}\int_{H_2^c}\ldots\notag\\
&=\int_{H_2}\int_{H_2}\frac{[(w+v)(x)-(w+v)(y)][v(x)-v(y)]}{|x-y|^{N+2s}}\ dxdy-2\int_{H_2}\int_{H_2}\frac{[ w (r_2(x))-(w+v)(y)]v(y)}{|r_2(x)-y|^{N+2s}}\ dxdy\notag\\
&=\int_{H_2}\int_{H_2}\frac{[(w+v)(x)-(w+v)(y)][v(x)-v(y)]}{|x-y|^{N+2s}}\ dxdy-2\int_{H_2}\int_{H_2}\frac{[ -w (x)-(w+v)(y)]v(y)}{|r_2(x)-y|^{N+2s}}\ dxdy\notag\\
&=\int_{H_1\cap H_2}\int_{H_1\cap H_2}\frac{[(w+v)(x)-(w+v)(y)][v(x)-v(y)]}{|x-y|^{N+2s}}\ dxdy+\int_{H_2\setminus H_1}\int_{H_2\setminus H_1}\ldots +2\int_{H_2\cap H_1}\int_{H_2\setminus H_1}\ldots \notag\\
&\quad -2\int_{H_2\cap H_1}\int_{H_2\cap H_1}\frac{[ -w (x)-(w+v)(y)]v(y)}{|r_2(x)-y|^{N+2s}}\ dxdy- 2\int_{H_2\setminus H_1}\int_{H_2\setminus H_1}\ldots-2\int_{H_2\cap H_1}\int_{H_2\setminus H_1}\ldots-2\int_{H_2\setminus H_1}\int_{H_2\cap H_1}\ldots\notag\\
&=\int_{H_1\cap H_2}\int_{H_1\cap H_2}\frac{[(w+w^-)(x)-(w+w^-)(y)][w^-(x)-w^-(y)]}{|x-y|^{N+2s}}\ dxdy\notag\\
&\quad -\int_{H_2\setminus H_1}\int_{H_2\setminus H_1}\frac{[(w-w^+)(x)-(w-w^+)(y)][w^+(x)-w^+(y)]}{|x-y|^{N+2s}}\ dxdy\notag\\
&\quad  -2\int_{H_2\cap H_1}\int_{H_2\setminus H_1}\frac{[(w-w^+)(x)-(w+w^-)(y)][w^+(x)+w^-(y)]}{|x-y|^{N+2s}}\ dxdy \notag\\
&\quad -2\int_{H_2\cap H_1}\int_{H_2\cap H_1}\frac{[ -w (x)-(w+w^-)(y)]w^-(y)}{|r_2(x)-y|^{N+2s}}\ dxdy + 2\int_{H_2\setminus H_1}\int_{H_2\setminus H_1}\frac{[ -w (x)-(w-w^+)(y)]w^+(y)}{|r_2(x)-y|^{N+2s}}\ dxdy\notag\\
&\quad +2\int_{H_2\setminus H_1}\int_{H_2\cap H_1}\frac{[ -w (x)-(w-w^+)(y)]w^+(y)}{|r_2(x)-y|^{N+2s}}\ dxdy-2\int_{H_2\cap H_1}\int_{H_2\setminus H_1}\frac{[ -w (x)-(w+w^-)(y)]w^-(y)}{|r_2(x)-y|^{N+2s}}\ dxdy\notag\\
&=\int_{H_1\cap H_2}\int_{H_1\cap H_2}\frac{[w^+(x)-w^+(y)][w^-(x)-w^-(y)]}{|x-y|^{N+2s}}\ dxdy\notag\\
&\quad -\int_{H_2\setminus H_1}\int_{H_2\setminus H_1}\frac{[-w^-(x)+w^-(y)][w^+(x)-w^+(y)]}{|x-y|^{N+2s}}\ dxdy\notag\\
&\quad  -2\int_{H_2\cap H_1}\int_{H_2\setminus H_1}\frac{[-w^-(x)-w^+(y)][w^+(x)+w^-(y)]}{|x-y|^{N+2s}}\ dxdy \notag\\
&\quad -2\int_{H_2\cap H_1}\int_{H_2\cap H_1}\frac{[ -w (x)-w^+(y)]w^-(y)}{|r_2(x)-y|^{N+2s}}\ dxdy + 2\int_{H_2\setminus H_1}\int_{H_2\setminus H_1}\frac{[ -w (x)+w^-(y)]w^+(y)}{|r_2(x)-y|^{N+2s}}\ dxdy\notag\\
&\quad +2\int_{H_2\setminus H_1}\int_{H_2\cap H_1}\frac{[ -w (x)+w^-(y)]w^+(y)}{|r_2(x)-y|^{N+2s}}\ dxdy-2\int_{H_2\cap H_1}\int_{H_2\setminus H_1}\frac{[ -w (x)-w^+(y)]w^-(y)}{|r_2(x)-y|^{N+2s}}\ dxdy\notag\\
&=-\int_{H_1\cap H_2}\int_{H_1\cap H_2}\frac{w^+(x)w^-(y)+w^+(y)w^-(x)}{|x-y|^{N+2s}}\ dxdy -\int_{H_2\setminus H_1}\int_{H_2\setminus H_1}\frac{w^-(x)w^+(y)+w^-(y)w^+(x) }{|x-y|^{N+2s}}\ dxdy\notag\\
&\quad  +2\int_{H_2\cap H_1}\int_{H_2\setminus H_1}\frac{w^-(x)w^-(y)+w^+(y)w^+(x)}{|x-y|^{N+2s}}\ dxdy \notag\\
&\quad +2\int_{H_2\cap H_1}\int_{H_2\cap H_1}\frac{ w (x) w^-(y)}{|r_2(x)-y|^{N+2s}}\ dxdy - 2\int_{H_2\setminus H_1}\int_{H_2\setminus H_1}\frac{ w (x) w^+(y)}{|r_2(x)-y|^{N+2s}}\ dxdy\notag\\
&\quad -2\int_{H_2\setminus H_1}\int_{H_2\cap H_1}\frac{ w (x) w^+(y)}{|r_2(x)-y|^{N+2s}}\ dxdy+2\int_{H_2\cap H_1}\int_{H_2\setminus H_1}\frac{ w (x) w^-(y)}{|r_2(x)-y|^{N+2s}}\ dxdy\notag\\
&= -2\int_{H_1\cap H_2}\int_{H_1\cap H_2}\frac{w^+(x)w^-(y) }{|x-y|^{N+2s}}\ dxdy -2\int_{H_2\setminus H_1}\int_{H_2\setminus H_1}\frac{w^-(x)w^+(y)  }{|x-y|^{N+2s}}\ dxdy\notag\\
&\quad  +2\int_{H_2\cap H_1}\int_{H_2\setminus H_1}\frac{w^-(x)w^-(y)+w^+(y)w^+(x)}{|x-y|^{N+2s}}\ dxdy \notag\\
&\quad +2\int_{H_2\cap H_1}\int_{H_2\cap H_1}\frac{ w^+(x) w^-(y)-w^-(x) w^-(y)}{|r_2(x)-y|^{N+2s}}\ dxdy - 2\int_{H_2\setminus H_1}\int_{H_2\setminus H_1}\frac{ w^+ (x) w^+(y)-w^-(x)w^+(y)}{|r_2(x)-y|^{N+2s}}\ dxdy\notag\\
&\quad -2\int_{H_2\setminus H_1}\int_{H_2\cap H_1}\frac{ w^+(x) w^+(y)-w^-(x)w^+(y)}{|r_2(x)-y|^{N+2s}}\ dxdy+2\int_{H_2\cap H_1}\int_{H_2\setminus H_1}\frac{ w^+(x) w^-(y)-w^-(x)w^-(y)}{|r_2(x)-y|^{N+2s}}\ dxdy\notag\\
&= -2\int_{H_2\cap H_1}\int_{H_2\cap H_1}\frac{w^-(x) w^-(y)}{|r_2(x)-y|^{N+2s}}\ dxdy- 2\int_{H_2\setminus H_1}\int_{H_2\setminus H_1}\frac{ w^+ (x) w^+(y)}{|r_2(x)-y|^{N+2s}}\ dxdy\notag\\
&\quad -2\int_{H_2\cap H_1}\int_{H_2\cap H_1}w^+(x)w^-(y)\Big(\frac{1}{|x-y|^{N+2s}}-\frac{1}{|r_2(x)-y|^{N+2s}}\Big)\ dxdy\notag\\
&\quad- 2\int_{H_2\setminus H_1}\int_{H_2\setminus H_1}w^-(x)w^+(y)\Big(\frac{1}{|x-y|^{N+2s}}-\frac{1}{|r_2(x)-y|^{N+2s}}\Big)\ dxdy\notag\\
&\quad +2\int_{H_2\cap H_1}\int_{H_2\cap H_1}\frac{w^-(r_1(x))w^-(y)+w^+(y)w^+(r_1(x))}{|r_1(x)-y|^{N+2s}}\ dxdy\notag\\
&\quad -2\int_{H_2\cap H_1}\int_{H_2\cap H_1}\frac{ w^+(x) w^+(r_1(y))-w^-(x)w^+(r_1(y))}{|r_2(x)-r_1(y)|^{N+2s}}\ dxdy\notag\\
&\quad +2\int_{H_2\cap H_1}\int_{H_2\cap  H_1}\frac{ w^+(r_1(x)) w^-(y)-w^-(r_1(x))w^-(y)}{|r_{1,2}(x)-y|^{N+2s}}\ dxdy\notag\\
&= -2\int_{H_2\cap H_1}\int_{H_2\cap H_1}\frac{w^-(x) w^-(y)}{|r_2(x)-y|^{N+2s}}\ dxdy - 2\int_{H_2\setminus H_1}\int_{H_2\setminus H_1}\frac{ w^+ (x) w^+(y)}{|r_2(x)-y|^{N+2s}}\ dxdy\notag\\
&\quad -2\int_{H_2\cap H_1}\int_{H_2\cap H_1}w^+(x)w^-(y)\Big(\frac{1}{|x-y|^{N+2s}}-\frac{1}{|r_2(x)-y|^{N+2s}}\Big)\ dxdy\notag\\
&\quad- 2\int_{H_2\setminus H_1}\int_{H_2\setminus H_1}w^-(x)w^+(y)\Big(\frac{1}{|x-y|^{N+2s}}-\frac{1}{|r_2(x)-y|^{N+2s}}\Big)\ dxdy\notag\\
&\quad +2\int_{H_2\cap H_1}\int_{H_2\cap H_1}\frac{w^+(x)w^-(y)+w^+(y)w^-(x)}{|r_1(x)-y|^{N+2s}}\ dxdy\notag\\
&\quad -2\int_{H_2\cap H_1}\int_{H_2\cap H_1}\frac{ w^+(x) w^-(y)-w^-(x)w^-(y)}{|r_2(x)-r_1(y)|^{N+2s}}\ dxdy+2\int_{H_2\cap H_1}\int_{H_2\cap  H_1}\frac{ w^-(x) w^-(y)-w^+(x)w^-(y)}{|r_{1,2}(x)-y|^{N+2s}}\ dxdy\notag\\
&= -2\int_{H_2\cap H_1}\int_{H_2\cap H_1}\frac{w^-(x) w^-(y)}{|r_2(x)-y|^{N+2s}}\ dxdy- 2\int_{H_2\cap H_1}\int_{H_2\cap H_1}\frac{ w^- (x) w^-(y)}{|r_{1,2}(x)-r_1(y)|^{N+2s}}\ dxdy\notag\\
&\quad -2\int_{H_2\cap H_1}\int_{H_2\cap H_1}w^+(x)w^-(y)\Big(\frac{1}{|x-y|^{N+2s}}-\frac{1}{|r_2(x)-y|^{N+2s}}\Big)\ dxdy\notag\\
&\quad- 2\int_{H_2\cap H_1}\int_{H_2\cap H_1}w^+(x)w^-(y)\Big(\frac{1}{|x-y|^{N+2s}}-\frac{1}{|r_{1,2}(x)-r_1(y)|^{N+2s}}\Big)\ dxdy\notag\\
&\quad +2\int_{H_2\cap H_1}\int_{H_2\cap H_1}\frac{w^+(x)w^-(y)+w^+(y)w^-(x)}{|r_1(x)-y|^{N+2s}}\ dxdy\notag\\
&\quad -2\int_{H_2\cap H_1}\int_{H_2\cap H_1}\frac{ w^+(x) w^-(y)-w^-(x)w^-(y)}{|r_2(x)-r_1(y)|^{N+2s}}\ dxdy\notag\\
&\quad +2\int_{H_2\cap H_1}\int_{H_2\cap  H_1}\frac{ w^-(x) w^-(y)-w^+(x)w^-(y)}{|r_{1,2}(x)-y|^{N+2s}}\ dxdy\notag\\
&= -4\int_{H_2\cap H_1}\int_{H_2\cap H_1}\frac{w^-(x) w^-(y)}{|r_2(x)-y|^{N+2s}}\ dxdy-4\int_{H_2\cap H_1}\int_{H_2\cap H_1}w^+(x)w^-(y)\Big(\frac{1}{|x-y|^{N+2s}}-\frac{1}{|r_2(x)-y|^{N+2s}}\Big)\ dxdy\notag\\
&\quad +2\int_{H_2\cap H_1}\int_{H_2\cap H_1}\frac{w^+(x)w^-(y)+w^+(y)w^-(x)}{|r_1(x)-y|^{N+2s}}\ dxdy\notag\\
&\quad -4\int_{H_2\cap H_1}\int_{H_2\cap H_1}\frac{  w^+(x) w^-(y) }{|r_{1,2}(x)-y|^{N+2s}}\ dxdy+4\int_{H_2\cap H_1}\int_{H_2\cap H_1}\frac{ w^-(x)w^-(y)}{|r_{1,2}(x)-y|^{N+2s}}\ dxdy\notag\\
&= -4\int_{H_2\cap H_1}\int_{H_2\cap H_1}w^-(x) w^-(y)\Big(\frac{1}{|r_2(x)-y|^{N+2s}}-\frac{ 1}{|r_{1,2}(x)-y|^{N+2s}}\Big)\ dxdy\notag\\
&\quad -4\int_{H_2\cap H_1}\int_{H_2\cap H_1}w^+(x)w^-(y)\Big(\frac{1}{|x-y|^{N+2s}}-\frac{1}{|r_2(x)-y|^{N+2s}}\Big)\ dxdy\notag\\
&\quad +4\int_{H_2\cap H_1}\int_{H_2\cap H_1}\frac{w^+(x)w^-(y)}{|r_1(x)-y|^{N+2s}}\ dxdy-4\int_{H_2\cap H_1}\int_{H_2\cap H_1}\frac{  w^+(x) w^-(y) }{|r_{1,2}(x)-y|^{N+2s}}\ dxdy\notag\\
&= -4\int_{H_2\cap H_1}\int_{H_2\cap H_1}w^-(x) w^-(y)\Big(\frac{1}{|r_2(x)-y|^{N+2s}}-\frac{ 1}{|r_{1,2}(x)-y|^{N+2s}}\Big)\ dxdy\notag\\
&\quad -4\int_{H_2\cap H_1}\int_{H_2\cap H_1}w^+(x)w^-(y)\Big(\frac{1}{|x-y|^{N+2s}}-\frac{1}{|r_2(x)-y|^{N+2s}}-\frac{1}{|r_1(x)-y|^{N+2s}}+\frac{1}{|r_{1,2}(x)-y|^{N+2s}}\Big)\ dxdy.\notag
\end{align} 
From here the statement of the Lemma follows, once we show the following claim:
%\footnote{Note that it is easy to check that $|x-y|^2-|r_2(x)-y|^2-|r_1(x)-y|^2+|r_{1,2}(x)-y|^2=0$ for $x,y\in H_1\cap H_2$}: 
\begin{equation}
\label{claim-double}
\text{\textit{Claim:}}\quad \frac{1}{|x-y|^{N+2s}}-\frac{1}{|r_2(x)-y|^{N+2s}}-\frac{1}{|r_1(x)-y|^{N+2s}}+\frac{1}{|r_{1,2}(x)-y|^{N+2s}}\geq 0,\quad x,y\in H_1\cap H_2
\end{equation}
We write
\[
\frac{1}{|x-y|^{N+2s}}-\frac{1}{|r_2(x)-y|^{N+2s}}=\frac{1}{|x-y|^{N+2s}}\Bigg(1-\Big(\frac{|x-y|^2}{|r_2(x)-y|^2}\Big)^{\frac{N}{2}+s}\Bigg)
\]
and
\[
\frac{1}{|r_1(x)-y|^{N+2s}}-\frac{1}{|r_{1,2}(x)-y|^{N+2s}}=\frac{1}{|r_1(x)-y|^{N+2s}}\Bigg(1-\Big(\frac{|r_1(x)-y|^2}{|r_{1,2}(x)-y|^2}\Big)^{\frac{N}{2}+s}\Bigg).
\]
In the following, fix $x,y\in H_1\cap H_2$ and without loss we may assume $e_1=(1,0,\ldots,0)$ and $e_2=(0,1,0,\ldots,0)$. Indeed, otherwise we may rotate the half spaces and since $\cE_s$ is invariant under rotations the situation remains the same. Then with $D:=\sum\limits_{k=2}^{N}(x_k-y_k)^2$
\begin{align*}
|r_1(x)-y|^2&=(x_1+y_1)^2+(x_2-y_2)^2+D=4x_1y_1+ (x_1-y_1)^2+(x_2-y_2)^2+D=4x_1y_1+|x-y|^2\\
|r_2(x)-y|^2&=(x_1-y_1)^2+(x_2+y_2)^2+D=4x_2y_2+ (x_1-y_1)^2+(x_2-y_2)^2+D=4x_2y_2+|x-y|^2\\
|r_{1,2}(x)-y|^2&=(x_1+y_1)^2+(x_2+y_2)^2+D=4x_1y_1+4x_2y_2+ (x_1-y_1)^2+(x_2-y_2)^2+D=4x_1y_1+4x_2y_2+|x-y|^2
\end{align*}
Thus with $M:=|x-y|^2$ we have
\begin{align*}
 &\frac{1}{M^{\frac{N}{2}+s}}-\frac{1}{|r_2(x)-y|^{N+2s}}-\frac{1}{|r_1(x)-y|^{N+2s}}+\frac{1}{|r_{1,2}(x)-y|^{N+2s}}\\
&=\frac{1}{M^{\frac{N}{2}+s}}\Bigg(1-\Big(\frac{M}{|r_2(x)-y|^2}\Big)^{\frac{N}{2}+s} -\Big(\frac{M}{|r_1(x)-y|^2}\Big)^{\frac{N}{2}+s}\Bigg(1-\Big(\frac{|r_1(x)-y|^2}{|r_{1}(x)-r_2(y)|^2}\Big)^{\frac{N}{2}+s}\Bigg)\Bigg)\\
&=\frac{1}{M^{\frac{N}{2}+s}}\Bigg(1-\Big(\frac{M}{|r_2(x)-y|^2}\Big)^{\frac{N}{2}+s} -\Big(\frac{M}{|r_1(x)-y|^2}\Big)^{\frac{N}{2}+s} +\Big(\frac{M}{|r_{1,2}(x)-y|^2}\Big)^{\frac{N}{2}+s} \Bigg)\\
&=\frac{1}{M^{\frac{N}{2}+s}}\Bigg(1+\Big(\frac{M}{4x_1y_1+4x_2y_2+M}\Big)^{\frac{N}{2}+s}-\Big(\frac{M}{4x_1y_1+M}\Big)^{\frac{N}{2}+s} -\Big(\frac{M}{4x_2y_2+M}\Big)^{\frac{N}{2}+s}  \Bigg).
\end{align*}
Using the notation $a=4x_1y_1>0$ and $b=4x_2y_2>0$, we may consider for fixed $M>0$ the map
\[
f:[0,\infty)^2\to\R,\quad (a,b)\mapsto 1+\Big(\frac{M}{a+b+M}\Big)^{\frac{N}{2}+s}-\Big(\frac{M}{a+M}\Big)^{\frac{N}{2}+s} -\Big(\frac{M}{b+M}\Big)^{\frac{N}{2}+s}.
\]
Then \eqref{claim-double} follows once $f\geq0$. Note that
\[
\nabla f(a,b)=-(\frac{N}{2}+s)M^{\frac{N}{2}+s}\left(\begin{array}{c}\Big(\frac{1}{a+b+M}\Big)^{\frac{N}{2}+1+s}-\Big(\frac{1}{a+M}\Big)^{\frac{N}{2}+1+s}\\
\Big(\frac{1}{a+b+M}\Big)^{\frac{N}{2}+1+s}-\Big(\frac{1}{b+M}\Big)^{\frac{N}{2}+1+s}
\end{array}\right).
\]
Clearly, $f$ has a saddle node at $(0,0)$, but note that for any $(c,d)\in [0,\infty)^2$ we have
\[
\nabla f(a,b){c\choose d}>0,
\]
so that $f$ is increasing in any direction $(c,d)$. In particular, since $f(0,0)=0$, it follows that $f(a,b)\geq 0$ for $a,b\geq 0$. Hence \eqref{claim-double} follows, which implies the assertion of the lemma.
\end{proof}

In view of Lemma \ref{properties vs} we may define \textit{doubly antisymmetric supersolutions} as follows. Let $U\subset H_1\cap H_2$ and $c\in L^{\infty}(U)$. Then $w\in \cV^s(U)$ is called a doubly antisymmetric supersolution of 
\begin{equation}\label{weak-problem-doubly}
\left\{
\begin{aligned}
(-\Delta)^sw&\geq  c(x)w &&\text{in $U$,}\\
w&\geq 0 &&\text{in $H_1\cap H_2\setminus U$,}
\end{aligned}\right.
\end{equation}
if $w$ is doubly antisymmetric and satisfies
\[
\cE_s(w,\phi)\geq \int_{U} c(x)w(x)\phi(x)\ dx\quad\text{for all nonnegative $\phi\in \cH^s_0(U)$.}
\]

In the following, for an open set $U\subset H_1\cap H_2$ let

\[
\lambda_{1}^-(U):=\min_{\substack{ u\in \cH^s_0(U\cup r_1(U))\\ u\neq 0\\ u\circ r_1\equiv-u}} \frac{\cE_s(u,u)}{\|u\|_{L^2(U\cup r_1(U))}^2}.
\]
 
We emphasize that $\lambda_{1}^{-}(U)>\lambda_{1}(U\cup r_1(U))$, where $\lambda_1(D)$ denotes the first Dirichlet eigenvalue of $(-\Delta)^s$ in $D$. Since (see e.g. \cite[Lemma 2.1]{JW16})
$$
\sup_{\substack{D\subset \R^N\ \text{open}\\ |D|\leq \delta}} \lambda_1(D)\to \infty \quad\text{ as $\delta\to 0$,}
$$
 it follows also that 
\begin{equation}\label{lambda- is large}
\sup_{\substack{U\subset \R^N\ \text{open}\\ |U|\leq \delta}} \lambda_{1}^{-}(U)\to \infty  \quad\text{as $|U|\to 0$.}
\end{equation}
We thus can show the following version of a small volume maximum principle for doubly antisymmetric supersolutions.

\begin{proposition}\label{lem:0.2}
Let $c_{\infty}>0$. Then there is $\delta>0$ such that the following is true. For all $U\subset H_1\cap H_2$ open with $|U|\leq \delta$, $c\in L^{\infty}(U)$ with $c\leq c_{\infty}$, and all doubly antisymmetric supersolutions $w$ of \eqref{weak-problem-doubly} it follows that $w\geq0$ in $H_1\cap H_2$.
\end{proposition}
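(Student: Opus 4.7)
The plan is to test the supersolution inequality against the function $v=w^-1_{H_1}1_{H_2}-w^+1_{H_1^c}1_{H_2}$ supplied by Lemma \ref{test-function} and then play its doubly antisymmetric structure against the first doubly antisymmetric eigenvalue $\lambda_1^-(U)$. More precisely, I would set $v^+:=w^-1_{H_1\cap H_2}\in \cH^s_0(U)$ (its support sits inside $U$ because $w\geq 0$ on $(H_1\cap H_2)\setminus U$), observe that $v^+\geq 0$, and check that $v=v^+-v^+\circ r_1$ so that $v$ is antisymmetric with respect to $H_1$ with $\|v\|_{L^2(\R^N)}^2=2\|v^+\|_{L^2(U)}^2$. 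Lemma \ref{test-function} gives
\[
\cE_s(v,v)\leq -\cE_s(w,v).
\]

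The key step is to rewrite the right-hand side using the antisymmetry of $w$ with respect to $H_1$. A change of variables $x\mapsto r_1(x)$, $y\mapsto r_1(y)$ inside $\cE_s$ combined with $w\circ r_1=-w$ yields $\cE_s(w,v^+\circ r_1)=-\cE_s(w,v^+)$, hence
\[
\cE_s(w,v)=\cE_s(w,v^+)-\cE_s(w,v^+\circ r_1)=2\,\cE_s(w,v^+).
\]
Since $v^+\in\cH^s_0(U)$ is nonnegative, the definition of a doubly antisymmetric supersolution applies, and together with $wv^+=-(w^-)^2 1_{H_1\cap H_2}$ and $c\leq c_\infty$ this gives
\[
\cE_s(w,v^+)\geq \int_U c(x)\,w(x)\,v^+(x)\,dx\geq -c_\infty\int_U (w^-)^2 1_{H_1\cap H_2}\,dx=-c_\infty\|v^+\|_{L^2(U)}^2.
\]
Chaining the inequalities produces
\[
\cE_s(v,v)\leq -2\,\cE_s(w,v^+)\leq 2c_\infty\|v^+\|_{L^2(U)}^2=c_\infty\|v\|_{L^2(\R^N)}^2.
\]

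Finally, I would invoke the variational characterization of $\lambda_1^-(U)$: since $v\in\cH^s_0(U\cup r_1(U))$ is antisymmetric with respect to $H_1$, one has
\[
\cE_s(v,v)\geq \lambda_1^-(U)\,\|v\|_{L^2(U\cup r_1(U))}^2=\lambda_1^-(U)\,\|v\|_{L^2(\R^N)}^2.
\]
Combining with the previous bound gives $(\lambda_1^-(U)-c_\infty)\|v\|_{L^2(\R^N)}^2\leq 0$. By \eqref{lambda- is large} there exists $\delta>0$ such that $|U|\leq\delta$ forces $\lambda_1^-(U)>c_\infty$, and then $v\equiv 0$, so $w^-\equiv 0$ on $H_1\cap H_2$, as desired.

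The main obstacle I anticipate is the fact that $v$ itself is not of constant sign, so the supersolution inequality cannot be applied to $v$ directly; the trick above that reduces $\cE_s(w,v)$ to twice the energy against the nonnegative $v^+$ via antisymmetry with respect to $H_1$ is what makes the argument go through. A minor technical point is verifying $v^+\in\cH^s_0(U)$ (which follows from Lemma \ref{properties vs}.\ref{item3} applied to $v$ together with $\mathrm{supp}(v^+)\subset U$) and the invariance of $\cE_s$ under the reflection $r_1$, both of which are routine.
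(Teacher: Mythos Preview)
Your proof is correct and follows essentially the same approach as the paper's: both test against the function $v$ from Lemma \ref{test-function}, use the antisymmetry of $w$ with respect to $H_1$ to reduce $\cE_s(w,v)$ to twice the pairing against the nonnegative part $v^+\in\cH^s_0(U)$, apply the supersolution inequality there, and close via the variational characterization of $\lambda_1^-(U)$. The only cosmetic difference is the order of the final chain: the paper bounds $\cE_s(w,v)\geq -\cE_s(v,v)$ and invokes the equality case of Lemma \ref{test-function}, whereas you sandwich $\cE_s(v,v)$ between $\lambda_1^-(U)\|v\|_{L^2}^2$ and $c_\infty\|v\|_{L^2}^2$ directly.
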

\begin{proof}
Let $c_{\infty}>0$. By \eqref{lambda- is large}, we may fix $\delta>0$ such that $c_{\infty}\leq \lambda_{1}^{-}(U)$ for all open sets $U\subset H_1\cap H_2$ with $|U|\leq \delta$. Fix such an open set $U$ and let $c\in L^{\infty}(U)$. Then note that we may reflect $c$ evenly across $\partial H_1$. Then we have for any with respect to $\partial H_1$ antisymmetric function $\varphi\in \cH^s_0(V)$, $V=U\cup r_1(U)$ with $\varphi\geq 0$ in $U$:
\begin{align*}
\cE_s(w,\varphi)=\cE_s(w,1_U\varphi)+\cE_s(w,1_{r_1(U)}\varphi)\geq \int_U c(x)w(x)\varphi(x)+\int_{r_1(U)}c(x)w(x)\varphi(x)\ dx=\int_Vc(x)w(x)\varphi(x)\ dx.
\end{align*}
Here, we have used the antisymmetry of $w$ and $\varphi$ with respect to $\partial H_1$ and Lemma \ref{properties vs} to have $1_U\varphi\in \cH^s_0(U)$, $1_{r_1(U)}\varphi\in \cH^s_0(r_1(U))$, and 
\begin{align*}
\cE_s(w,1_{r_1(U)}\varphi)=\cE_s(w\circ r_1,1_U\varphi\circ r_1)=\cE_s(w,1_U\varphi)\geq \int_Uc(x)w(x)\varphi(x)\ dx=\int_{r_1(U)}c(x)w(x)\varphi(x)\ dx,
\end{align*}
since we extended $c$ evenly across $\partial H_1$. Then $v=w^-1_{H_1}1_{H_2}-w^+1_{H_1^c}1_{H_2}\in \cH^s_0(U\cup r_1(U))$ by Lemma \ref{test-function} and we have by symmetry
\begin{align*}
\cE_s(w,v)&=\int_{U}c(x)w(x)w^-(x)\ dx-\int_{r_1(U)}c(x)w(x)w^+(x)\ dx=-\int_{U}c(x)(w^-(x))^2\ dx-\int_{r_1(U)}c(x)(w^+(x))^2\ dx\\
&\geq- \lambda_{1,s}^-\Bigg(\int_{U} (w^-(x))^2\ dx+\int_{r_1(U)} (w^+(x))^2\ dx\Bigg)=- \lambda_{1,s}^-\|v\|_{L^2(V)}^2\geq -  \cE_s(v,v).
\end{align*}
Hence with Lemma \ref{test-function} we have $0\leq \cE_s(w,v)+\cE_s(v,v)\leq 0$ and this can only be true if $v\equiv 0$.
\end{proof}

In the next statement, we give a Hopf type lemma for equation \eqref{weak-problem-doubly} similar to \cite[Proposition 3.3]{FJ15}.

\begin{proposition}
\label{Hopf-lemma-doubly-anti}
Let $U\subset H_1\cap H_2$ open. Furthermore, let $c\in L^\infty(U)$ and let $u\in \cV^s(U)$ be a doubly antisymmetric supersolution of \eqref{weak-problem-doubly}. Assume $u\geq0$ in $H_1\cap H_2$. Then either $u\equiv 0$ or $u>0$ in $U$ in the sense that
\[
\essinf_{K}u>0\quad\text{for all compact sets $K\subset U$.}
\]
Moreover, if there is $x_0\in \partial U\setminus [\partial H_1\cup \partial H_2]$ such that
\begin{enumerate}
\item there exists a ball $B\subset U$ with $\partial B\cap \partial U=\{x_0\}$ and $\lambda_{1,s}^-(B)\geq c$ and
\item $u(x_0)=0$,
\end{enumerate}
then there exists $C>0$ such that
$$
u\geq C\delta^s_{B}\qquad\text{in}\qquad B,
$$
where $\delta_B$ denote the distance to boundary of $B$, and, in particular, if $u\in C(B)$, then
$$
\liminf_{t\downarrow 0}\frac{u(x_0-t\nu(x_0))}{t^s}>0.
$$
\end{proposition}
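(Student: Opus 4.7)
The plan is to treat the two conclusions in turn, both resting on the doubly antisymmetric kernel inequality \eqref{claim-double} that already powered Lemma~\ref{test-function}.

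For the strict interior positivity, I would argue by contradiction. Suppose $u \not\equiv 0$ but $u$ fails to be essentially strictly positive on some compact $K \subset U$. Invoking the interior regularity of fractional supersolutions from \cite{RS14,G15:2} to evaluate the equation pointwise at a zero $x_0 \in U$, the double antisymmetry collapses $(-\Delta)^s u(x_0)$ to
\[
(-\Delta)^s u(x_0) = -c_{N,s}\int_{H_1\cap H_2} u(y)\,\Bigl[\tfrac{1}{|x_0-y|^{N+2s}}-\tfrac{1}{|x_0-r_1 y|^{N+2s}}-\tfrac{1}{|x_0-r_2 y|^{N+2s}}+\tfrac{1}{|x_0-r_{1,2}y|^{N+2s}}\Bigr]\, dy.
\]
The bracket is non-negative by \eqref{claim-double} and strictly positive in the interior of $H_1\cap H_2$, while $c(x_0)u(x_0)=0$ forces the whole integral to vanish. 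Since $u\geq 0$ on $H_1\cap H_2$, this forces $u\equiv 0$ --- a contradiction. In the purely weak setting without pointwise evaluation, the same contradiction can be reached by applying Proposition~\ref{lem:0.2} on shrinking neighborhoods of the essential zero set.

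For the Hopf estimate, I would construct a doubly antisymmetric barrier. Write $B=B_R(z)$. By the first part there is $c_0>0$ with $u\geq c_0$ on $B_{R/2}(z)$. Starting from the torsion profile $\eta(x)=(R^2-|x-z|^2)_+^s$, which satisfies $(-\Delta)^s\eta=\gamma_{N,s}$ on $B$, define
\[
\tilde\eta := \eta - \eta\circ r_1 - \eta\circ r_2 + \eta\circ r_{1,2}.
\]
The hypothesis $x_0 \notin \partial H_1\cup\partial H_2$ together with $\partial B\cap\partial U=\{x_0\}$ forces $\overline B$ strictly inside $H_1\cap H_2$; in particular the four translates have pairwise disjoint supports, $\tilde\eta=\eta$ on $B$, and $\tilde\eta$ vanishes on $(H_1\cap H_2)\setminus B$. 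Splitting the fractional Laplacian of $\tilde\eta$ according to the reflection cells and invoking \eqref{claim-double} once more yields $(-\Delta)^s\tilde\eta\geq\gamma_{N,s}$ on $B$, while $\tilde\eta\sim\delta_B^s$ near $\partial B$.

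The comparison sets $v := u-\epsilon\tilde\eta$, a doubly antisymmetric function with $v\geq 0$ on $(H_1\cap H_2)\setminus B$ automatically and $v\geq 0$ on $B_{R/2}(z)$ once $\epsilon$ is small. Only the shell $B\setminus \overline{B_{R/2}(z)}$ remains. The main obstacle I expect is that $v$ fails the pointwise supersolution inequality $(-\Delta)^s v\geq cv$ on this shell: since $\tilde\eta$ vanishes at $\partial B$ while $(-\Delta)^s\tilde\eta$ stays bounded below, the deficit $\epsilon((-\Delta)^s\tilde\eta-c\tilde\eta)$ need not be non-negative. I would resolve this by thinning the outer shell to $S_\sigma := B\setminus\overline{B_{R-\sigma}(z)}$, chosen so small in measure that Proposition~\ref{lem:0.2} is available: the doubly antisymmetric eigenvalue $\lambda_{1,s}^-(S_\sigma)$ blows up as $\sigma\to 0$ and absorbs the bounded deficit, while shrinking $\epsilon$ further ensures $v\geq 0$ on $\overline{B_{R-\sigma}(z)}$ via Part~1, giving non-negative exterior data. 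The small-volume maximum principle then yields $v\geq 0$ on $S_\sigma$, hence on all of $H_1\cap H_2$, so $u\geq\epsilon\tilde\eta\geq C\delta_B^s$ on $B$. The $\liminf$ statement then follows from $\eta(x_0-t\nu(x_0))\sim (2R)^s t^s$ as $t\downarrow 0$.
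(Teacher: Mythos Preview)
Both parts of your proposal contain genuine gaps.

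For the strict positivity, your pointwise evaluation of $(-\Delta)^s u$ at an interior zero rests on interior regularity of supersolutions, but \cite{RS14,G15:2} treat only \emph{solutions} with bounded right-hand side; a function in $\cV^s(U)$ satisfying a weak differential inequality need not be continuous, so the argument is unjustified. Your fallback---Proposition~\ref{lem:0.2} on shrinking neighbourhoods of the zero set---can only reproduce $u\geq 0$, which is already the hypothesis; it does not yield strict positivity. The paper proceeds differently and never leaves the weak framework. It fixes a set $K\subset H_1\cap H_2$ with $\epsilon:=\essinf_K u>0$ and, for any small ball $B\subset U$ away from $K$, builds the doubly antisymmetric barrier $w$ obtained from $\psi_B+\alpha 1_K$ by the four reflections. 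The decisive ingredient you are missing is the remote mass $\alpha 1_K$: at points of $B$ its contribution to $(-\Delta)^s w$ is $-\alpha c_{N,s}$ times the integral over $K$ of the strictly positive kernel combination from \eqref{claim-double}, hence can be made arbitrarily negative by enlarging $\alpha$, forcing $(-\Delta)^s w\leq c\,w$ throughout $B$. Then $u-\tfrac{\epsilon}{\alpha}w$ is a genuine doubly antisymmetric supersolution of \eqref{weak-problem-doubly} in $B$ with nonnegative data on $(H_1\cap H_2)\setminus B$ (since $u\geq\epsilon=\tfrac{\epsilon}{\alpha}w$ on $K$), and Proposition~\ref{lem:0.2} gives $u\geq\tfrac{\epsilon}{\alpha}\psi_B\geq c_1\tfrac{\epsilon}{\alpha}\delta_B^s$ in $B$.

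For the Hopf estimate, your barrier $\tilde\eta$ satisfies $(-\Delta)^s\tilde\eta\geq\gamma_{N,s}$ on $B$, so $v=u-\epsilon\tilde\eta$ obeys only $(-\Delta)^s v\geq cv-\epsilon M$ on the shell. Proposition~\ref{lem:0.2} is stated for the homogeneous inequality $(-\Delta)^s v\geq cv$ and does not absorb an additive deficit: tracing its proof with the extra term yields at best $(\lambda_{1,s}^-(S_\sigma)-c_\infty)\,\|v^-1_{S_\sigma}\|_{L^2}\lesssim \epsilon M\,|S_\sigma|^{1/2}$, which is small but not zero, so the comparison does not close. The paper's $\alpha 1_K$ device sidesteps this entirely: $w$ is a subsolution on the whole of $B$, so a single application of Proposition~\ref{lem:0.2} on $B$ (where $\lambda_{1,s}^-(B)\geq c$ by hypothesis) yields $u\geq C\delta_B^s$ and the $\liminf$ statement directly. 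Had you added a term $-L\,1_{B_{R/2}(z)}$ to your $\tilde\eta$ and taken $L$ large, you would essentially have recovered the paper's construction with $K=B_{R/2}(z)$.
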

\begin{proof}
Assume $u\not\equiv 0$. Then there exists a set $K\subset H_1\cap H_2$ such that $|K|>0$ and such that
\begin{equation}
\label{choice-epsilon}
\epsilon:=\essinf_K u>0.
\end{equation}
Let $B\subset U$ be an open ball such that $\dist(B,K)>0$ and $\partial B\cap \partial H_i=\emptyset$ for $i=1,2$. By making $B$ smaller if necessary, we may assume 
\begin{equation}
\label{choice-B}
\lambda_{1,s}^-(B)\geq c
\end{equation}
Let $\psi_{B}\in \cH^s_0(B)$ be the solution to 
$$
\Ds \psi_{B} = 1\qquad\text{in}\qquad B
$$
Recall that there exists $c_i=c_i(N,s,B)>0$, $i=1,2$ such that $c_1\delta^s_B\leq \psi_B\leq c_2\delta^s_B$. For any $\alpha>0$, we define 
$$
\overline u :=\psi_{B} +\alpha 1_{K}-\psi_{r_1(B)}-\alpha  1_{r_1(K)}\qquad\text{and}\qquad w := \overline u-\overline u\circ r_2
$$
It is clear that $w\circ r_1 = -w=w\circ r_2$, that is, $w$ is doubly antisymmetric. Let $\varphi\in\cH^s_0(B)$ with $\varphi\geq 0$. Then, we have 
\begin{align}
&\cE_s(w,\varphi)=\cE_s(\overline u,\varphi)-\cE_s(\overline u\circ r_2,\varphi)\notag\\
&= \int_{B}\varphi(x)\,dx-\alpha c_{N,s}\int_{B}\varphi(x)\int_{K}\frac{dy}{|x-y|^{N+2s}}dx+\alpha c_{N,s}\int_{B}\varphi(x)\int_{r_1(K )}\frac{dy}{|x-y|^{N+2s}}dx\nonumber\\
&\quad+c_{N,s}\int_{B}\varphi(x)\int_{r_1(B)}\frac{\psi_B(y)}{|x-y|^{N+2s}}dy\,dx+c_{N,s}\int_{B\times B}\frac{\psi_B(x)\varphi(y)}{|x-r_2(y)|^{N+2s}}dx\,dy+\alpha c_{N,s}\int_{K}\int_{B}\frac{\varphi(y)}{|x-r_2(y)|^{N+2s}}dy\,dx\nonumber\\
&\quad-\alpha c_{N,s}\int_{r_1(K)\times r_2(B)}\frac{\varphi(r_2(y))}{|x-y|^{N+2s}}dx\,dy-c_{N,s}\int_{r_1(B)\times r_2(B)}\frac{\psi_{r_1(B)}(x)\varphi(r_2(y))}{|x-y|^{N+2s}}dx\,dy\nonumber\\
&=\int_{B}\varphi(x)\Big( 1-\alpha c_{N,s}\int_{K}\Big[\frac{1}{|x-y|^{N+2s}}-\frac{1}{|x-r_1(y)|^{N+2s}}-\frac{1}{|x-r_2(y)|^{N+2s}}+\frac{1}{|x-r_{1,2}(y)|^{N+2s}}\Big]dy\nonumber\\
&\quad+c_{N,s}\int_{B}\frac{\psi_B(r_1(y))}{|x-r_1(y)|^{N+2s}}dy+ c_{N,s}\int_{B}\frac{\psi_B(y)}{|x-r_2(y)|^{N+2s}}dy-c_{N,s}\int_{B}\frac{\psi_{r_1(B)}(r_1(y))}{|x-r_{1,2}(y)|^{N+2s}}dy\Big)\nonumber\\
&\leq \int_{B}\varphi(x)\Big( 1-\alpha c_{N,s}\int_{K}\Big[\frac{1}{|x-y|^{N+2s}}-\frac{1}{|x-r_1(y)|^{N+2s}}-\frac{1}{|x-r_2(y)|^{N+2s}}+\frac{1}{|x-r_{1,2}(y)|^{N+2s}}\Big]dy\nonumber\\
&\quad+c_{N,s}\|\psi_B\|_{L^\infty(\R^N)}\int_{B}\Big[\frac{1}{|x-r_1(y)|^{N+2s}}+ \frac{1}{|x-r_2(y)|^{N+2s}}+\frac{1}{|x-r_{1,2}(y)|^{N+2s}}\Big]dy\Big)\nonumber\\
&\leq  \int_{B}\varphi(x)\Big( \kappa-\alpha c_{N,s}\int_{K}\Big[\frac{1}{|x-y|^{N+2s}}-\frac{1}{|x-r_1(y)|^{N+2s}}-\frac{1}{|x-r_2(y)|^{N+2s}}+\frac{1}{|x-r_{1,2}(y)|^{N+2s}}\Big]dy\Big),\label{bilinear-form}
\end{align}
with 
$$
\kappa := 1+c_{N,s}\|\psi_B\|_{L^\infty(\R^N)}\int_{B}\Big[\frac{1}{|x-r_1(y)|^{N+2s}}+ \frac{1}{|x-r_2(y)|^{N+2s}}+\frac{1}{|x-r_{1,2}(y)|^{N+2s}}\Big]dy<\infty,
$$
where we have used that the boundary of $B$ does not touch $\partial H_1\cup \partial H_2$. Since $\overline{B}$ and $K$ are compactly contained in $H_1\cap H_2$, it follows that
$$
C:=\inf_{x\in B,\ y\in K} \Bigg(\frac{1}{|x-y|^{N+2s}}-\frac{1}{|x-r_1(y)|^{N+2s}}- \frac{1}{|x-r_2(y)|^{N+2s}}+\frac{1}{|x-r_{1,2}(y)|^{N+2s}}\Bigg)>0.
$$
Since $c, \psi_B\in L^\infty(U)$, we may hence choose $\alpha$ large enough so that
$$
\kappa-\alpha c_{N,s}\int_{K}\Big[\frac{1}{|x-y|^{N+2s}}-\frac{1}{|x-r_1(y)|^{N+2s}}-\frac{1}{|x-r_2(y)|^{N+2s}}+\frac{1}{|x-r_{1,2}(y)|{N+2s}}\Big]dy\leq c(x)\psi_B(x)\quad\forall\,x\in B.
$$
Consequently, equation \eqref{bilinear-form} gives
$$
\cE_s(w,\varphi)\leq \int_{B}c(x)\varphi(x)\psi_B(x)\,dx\qquad \text{for all nonnegative $\varphi\in \cH^s_0(B)$.}
$$
Therefore $-w$ satisfies in weak sense
\begin{equation}
\label{problem in B}
\left\{
\begin{aligned}
(-\Delta)^s(-w)&\geq c(x)(-w) &&\text{in $B$,}\\
(-w)&\geq 0 &&\text{in $H_1\cap H_2\setminus B$,}\\
-w\circ r_i&=w&&\text{in $\R^N$ for $i=1,2$.}
\end{aligned}\right.
\end{equation}
Next, consider $u_\epsilon:=u-\frac{\epsilon}{\alpha} w$ with $\epsilon$ given in \eqref{choice-epsilon}. Then $u_{\epsilon}$ also satisfies in weak sense \eqref{problem in B} where the \textit{nonlocal boundary condition} is satisfied by the choice of $\epsilon$. By \eqref{choice-B} and Lemma \ref{lem:0.2} we conclude that $u\geq \frac{\epsilon}{\alpha} \psi_B\geq \frac{\epsilon}{\alpha} c_1\delta^s_B$ in $B$. Since $B$ is chosen arbitrary, the above implies that $u>0$ in $U$ as stated.
%\footnote{The full argument goes along the following lines: Let $W$ be the set of points $x\in U$ such that there is $r>0$ with $B_r(x)\subset U$ and $\essinf_{B_r(x)}u>0$. Then $W$ is relatively open in $U$ by definition. Moreover, if $(x_n)_n\subset W$ is a sequence with $x_n\to x\in U$, then we may fix $r>0$ such that $B_r(x)\subset U$ and $n_0>2$ with $(x_n)_{n\geq n_0}\subset B_r(x)$. Choosing in the above argument $K=B_{r_1}(x_1)$ with $r_1$ given by $x_1\in W$ and $B_r(x)$ in place of $B$ (by making $r, r_1$ smaller if necessary we may assume \eqref{choice-B} and $\dist(B_{r_1}(x_1),B_r(x))>0$), it follows that $x\in W$. Hence $W$ is also closed relatively to $U$. Thus, for each connected component $W_i$ of $W$ we must have $W_i=\emptyset$ or $W_i$ coincides with a component of $U$. But since $u\not\equiv 0$, it follows also that not component can be empty (again by the same argument above) and thus $W=U$.}
If in addition there is $x_0\in \partial U\setminus[\partial H_1\cup \partial H_2]$ with the given properties, the above argument yields in particular
$$
\liminf_{t\downarrow 0}\frac{u(x_0-t\nu(x_0))}{t^s}\geq \epsilon \lim_{t\downarrow 0}\frac{\psi_B(x_0-t\nu(x_0))}{t^s}>0.
$$
This finishes the proof.
\end{proof}

\begin{remark}
To put the Hopf type statement in Proposition \ref{Hopf-lemma-doubly-anti} into perspective, consider in Problem \eqref{main-prob} the nonlinearity $f(x,u)=|u|^{2^{\ast}_s-2}u$ with $2^{\ast}_s:=\frac{2N}{N-2s}$, the critical fractional exponent. It was shown in \cite{RS12-2} that there is no positive bounded solution if $\Omega$ is starshaped. Up to our knowledge, it remains an open question, if there is a \textit{sign-changing} solution to this problem. Assuming that $\Omega$ is bounded and starshaped with $C^{1,1}$ boundary and there exists a bounded solution of \eqref{main-prob} with $f(x,u)=|u|^{2^{\ast}_s-2}u$, it first follows that $u\in C^s(\R^N)\cap C^{\infty}(\Omega)$ (see e.g. \cite{RS14}) and the fractional Pohozaev identity from \cite{RS12-2} implies
\[
\int_{\partial\Omega}\Big(\frac{u}{\dist(\cdot,\partial\Omega)^s}\Big)^2(x\cdot \nu)\ d\sigma=0.
\]
However, by \cite[Proposition 3.3]{FJ15} it then follows that if $\Omega$ has additionally a symmetry hyperplane $T$ and $u$ is odd with respect to reflections across this hyperplane and of one sign on one side of the hyperplane, then $\Big(\frac{u}{\dist(\cdot,\partial\Omega)^s}\Big)^2>0$ on $\partial \Omega\setminus T$. Whence, there cannot be such an odd solution of the problem. Similarly, using instead Proposition \ref{Hopf-lemma-doubly-anti}, it follows that there can also be no doubly antisymmetric solution of this problem if $\Omega$ satisfies (D). 
\end{remark}

\section{Symmetry of solutions} \label{symmetry}

In the following, we use the notation from Section \ref{linear} and assume $\Omega\subset \R^N$ satisfies (D). Moreover, $f\in C(\Omega\times \R)$ satisfies (F1) and (F2) and let $u\in L^{\infty}(U)\cap \cH^s_0(\Omega)$ be a solution of problem \eqref{main-prob} which satisfies $u\circ r_{N,0}=-u$. Note that by (F1) and \cite{RS14} it follows that $u\in C^s(\R^N)$. For $\lambda\in \R$ we may than define
$$
v_{\lambda}(x)=u(r_{\lambda,1}(x))-u(x).
$$
Then it follows that $v_{\lambda}$ is antisymmetric with respect to $H_{N,0}$ and $H_{1,\lambda}$, hence doubly antisymmetric, and it satisfies due to (F2)
\begin{equation}\label{linearization}
\left\{\begin{aligned}
(-\Delta)^sv_{\lambda}&\geq c_{\lambda}(x)v_{\lambda}&& \text{in $\Omega_{\lambda}:=\Omega\cap H_{N,0}\cap H_{1,\lambda}$,}\\
v_{\lambda} &\geq 0 && \text{in $H_{N,0}\cap H_{1,\lambda}\setminus \Omega_{\lambda}$,}
\end{aligned}\right.
\end{equation}
where
\[
c_{\lambda}(x)=\left\{\begin{aligned} &\frac{f(x,u(r_{\lambda,1}(x)))-f(x,u(x))}{u(r_{\lambda,1}(x))-u(x)} && u(r_{\lambda,1}(x))\neq u(x)\\
&0 && u(r_{\lambda,1}(x))=u(x)
\end{aligned}\right.
\]
Note that by assumption (F1) we have
\[
\sup_{\lambda\in \R}\sup_{x\in \Omega_{\lambda}}|c_{\lambda}(x)|=:c_{\infty}<\infty.
\]
Finally, let $\lambda_1:=\sup_{x\in \Omega} x_1$.

\begin{proof}[Proof of Theorem \ref{main-thm1}]
Assume that $u$ is nontrivial. We apply the moving plane method to then prove that $u$ is symmetric with respect to $H_{1,0}$ and decreasing in $x_1$. For this let 
$$
\lambda_0:=\inf\{\lambda\in(0,\lambda_1)\;:\; v_{\mu}> 0 \, \text{ in $\Omega_{\mu}$ for all $\mu\in(\lambda,\lambda_1)$}\}
$$
Next note that by (D) and Proposition \ref{lem:0.2} it follows that there is $\epsilon>0$ such that $v_{\mu}\geq 0$ for all $\lambda\in (\lambda_1-\epsilon,\lambda_1)$ and thus by Proposition \ref{Hopf-lemma-doubly-anti} we have $\lambda_0\leq \lambda_1-\epsilon$. Assume next by contradiction that $\lambda_0>0$. Then by continuity $v_{\lambda_0}\geq 0$ in $H_{N,0}\cap H_{1,\lambda_0}$. By Proposition \ref{Hopf-lemma-doubly-anti} it follows that either $v_{\lambda_0}\equiv 0$ or $v_{\lambda_0}>0$.\\
If $v_{\lambda_0}\equiv 0$, this implies that we have $u\equiv 0$ in $\Omega \setminus H_{1,\lambda_0-\lambda_1}$. But then, we can also start moving the hyperplane from the left (working instead with $\R^N\setminus H_{1,\lambda}$), up to the same $\lambda_0$. It then follows that $u$ has two different parallel symmetry hyperplanes, but since $u$ vanishes outside of $\Omega$, this implies $u\equiv 0$, which cannot be the case.\\
If $v_{\lambda_0}>0$, let $\delta>0$ be given by Proposition \ref{lem:0.2} according to $c_{\infty}$. Then by continuity there is $\mu>0$ such that and a compact set $K\subset \Omega_{\lambda_0}$ such that $|\Omega_{\lambda_0}\setminus K|\leq \frac{\delta}{2}$ and $v_{\lambda_0}\geq 2\mu$ in $K$. Again, by continuity, we can find $\tau\in(0,\lambda_1-\lambda_0)$ such that $v_{\lambda}\geq \mu$ for all $\lambda\in[\lambda_0-\tau,\lambda_0]$. Let $U_{\lambda}:=\{x\in \Omega_{\lambda}\;:\; v_{\lambda}<0\}$. Then, by making $\tau$ smaller if necessary, we may also assume $|U_{\lambda}|\leq \delta$ for all $\lambda\in[\lambda_0-\tau,\lambda_0]$. A combination of Proposition \ref{lem:0.2} and \ref{Hopf-lemma-doubly-anti} gives a contradiction to the definition of $\lambda_0$.\\
Whence, $\lambda_0>0$ is not possible. Thus $\lambda_0=0$ and this finishes the proof.
\end{proof}

\section{A symmetric sign-changing solution} \label{application} 

Let $\Omega\subset \R^N$ open and bounded and consider the functional
$$
J:\cH^s_0(\Omega)\to \R, \qquad J(u)=\cE_s(u,u).
$$
Let $M:=\{u\in \cH^s_0(\Omega)\;:\; u=-u\circ r_N,\ \int_{\Omega}|u(x)|^p\ dx=1\}$ with $1<p<\frac{2N}{N-2s}$. Then by a constraint minimization argument using the framework as explained e.g. in \cite{SV12,SV13}, see also \cite{BP16}, it follows that there exists such a minimizer $u$ of $J|_{M}$. That is, the minimum
\begin{equation}\label{defi-lambda}
\lambda_{1,p}^-=\min_{u\in M}\cE_s(u,u)
\end{equation}
is attained. Similar to \cite[Theorem 3.1]{BP16}, it can be shown that this minimizer is bounded and then, by an iteration of the results of \cite{RS14,G15:2}, we have $u\in C^{\infty}(\Omega)$. If in addition $\partial \Omega$ is of class $C^{1,1}$, then \cite{RS14,G15:2} also imply that $u\in C^s(\R^N)$.

\begin{proof}[Proof of Theorem \ref{main-thm2}]
Let $\lambda^-_{1,p}$ be as in \eqref{defi-lambda} 
and let $u$ be the minimizer as explained in the above remarks. In view of Theorem \ref{main-thm1} it remains to show that $u$ can be chosen of one sign in $\O^+:=\Omega\cap H_{N,0}$. In the following $\O^-=\O\setminus \O^+$. Assume by contradiction that $u$ changes sign in $\O^+$ and let $\O^+_1:=\{x\in \O^+: u(x)>0\}$ and $\O^+_2:=\{x\in \O^+: u(x)\leq 0\}$. We also let $\O^-_1 = r_{N,0}(\O^+_1)$, and $\O^-_2 = r_{N,0}(\O^+_2)$. By the property of $u$, it is clear that $u<0$ in $\O^-_1$ and $u\geq 0$ in $\O^-_2$. Now let $\overline u$ be defined by 
\be
\overline u = 1_{\O^+}|u|-1_{\O^-}|u|.
\ee

Then $\ov u\in M$, that is $\ov u\in \cH^s_0(\O)$ satisfies $\ov u\circ r_{N,0}=-\ov u$ and
\be\label{eq:l2-norm-ov u}
\int_{\O} |\ov u|^pdx = \int_{\O}(|\ov u|^2)^{p/2}dx=\int_{\O}(1_{\O^+} |u|^2+1_{\O^-}|u|^2)^{p/2}dx= \int_{\O}|u|^pdx=1.
\ee 
 Moreover, we have 
\begin{align}\label{eq:semi-norm-ov u}
&\frac{2}{C_{N,s}}\cE_s(\ov u,\ov u)=\int_{\R^N\times\R^N} \frac{\big(\ov u(x)-\ov u(y)\big)^2}{|x-y|^{N+2s}}dxdy=\int_{\O\times\O} \frac{\big(\ov u(x)-\ov u(y)\big)^2}{|x-y|^{N+2s}}dxdy+2\int_{\O}\ov u^2(x)\int_{\R^N\setminus\O}\frac{dy}{|x-y|^{N+2s}}\,dx\nonumber\\
&=\int_{\O^+\times\O} \frac{\big(\ov u(x)-\ov u(y)\big)^2}{|x-y|^{N+2s}}dxdy+\int_{\O^-\times\O} \frac{\big(\ov u(x)-\ov u(y)\big)^2}{|x-y|^{N+2s}}dxdy+4\int_{\O^+}\int_{\R^N\setminus\O}\frac{ u^2(x)dy}{|x-y|^{N+2s}}\,dx\nonumber\\
&=\int_{\O^+\times\O^+} \frac{\big( |u(x)|- |u(y)|\big)^2}{|x-y|^{N+2s}}dxdy+\int_{\O^-\times\O^-} \frac{\big( |u(x)|- |u(y)|\big)^2}{|x-y|^{N+2s}}dxdy+2\int_{\O^-\times\O^+} \frac{\big( |u|(x)+ |u|(y)\big)^2}{|x-y|^{N+2s}}dxdy\nonumber\\
&\qquad\qquad +4\int_{\O^+} u^2(x)\int_{\R^N\setminus\O}\frac{dy}{|x-y|^{N+2s}}\,dx.
%%
%&=2\int_{\R^N\times\R^N} \frac{\big( u(x)- u(y)\big)^2}{|x-y|^{N+2s}}dxdy-4\int_{\O^+}u^2(x)\int_{\R^N\setminus\O^+}\frac{dy}{|x-y|^{N+2s}}\\
%%
%&+2\int_{\O^+\times\O^+} \frac{\big( u(x)- u(y)\big)^2}{|r_{H_n}(x)-y|^{N+2s}}dxdy+4\int_{\O^+}u^2(x)\int_{\R^N\setminus\O^+}\frac{dy}{|x-y|^{N+2s}}\\
%%
%&-4\int_{\O^+}u^2(x)\int_{\O^+}\frac{dy}{|x-r_{H_n(y)}|^{N+2s}}dx\\
%%
%&=2\int_{\R^N\times\R^N} \frac{\big( u(x)- u(y)\big)^2}{|x-y|^{N+2s}}dxdy+2\int_{\O^+\times\O^+} \frac{ u^2(x)}{|r_{H_n}(x)-y|^{N+2s}}dxdy\\
%%
%&+2\int_{\O^+\times\O^+} \frac{ u^2(y)}{|r_{H_n}(x)-y|^{N+2s}}dxdy-4\int_{\O^+\times\O^+} \frac{u(x)u(y)}{|r_{H_n}(x)-y|^{N+2s}}dxdy\\
%%
%&-4\int_{\O^+}u^2(x)\int_{\O^+}\frac{dy}{|x-r_{H_n(y)}|^{N+2s}}dx\\
%%
%&=2\int_{\R^N\times\R^N} \frac{\big( u(x)- u(y)\big)^2}{|x-y|^{N+2s}}dxdy-4\int_{\O^+\times\O^+} \frac{u(x)u(y)}{|r_{H_n}(x)-y|^{N+2s}}dxdy.
\end{align}
Using the notation above, we rewrite
\begin{align}\label{eq:int-omega+}
\int_{\O^+\times\O^+} \frac{\big( |u(x)|- |u(y)|\big)^2}{|x-y|^{N+2s}}dxdy &= \int_{\O^+\times\O^+} \frac{\big( u(x)- u(y)\big)^2}{|x-y|^{N+2s}}dxdy +2\int_{\O^+_1\times\O^+_2}\frac{(u(x)+u(y))^2-(u(x)-u(y))^2}{|x-y|^{N+2s}}dxdy\nonumber\\
&=\int_{\O^+\times\O^+} \frac{\big( u(x)- u(y)\big)^2}{|x-y|^{N+2s}}dxdy+4\int_{\O^+_1\times\O^+_2}\frac{u(x)u(y)}{|x-y|^{N+2s}}dxdy.
%&=\int_{\R^N\times\R^N} \frac{\big( u(x)- u(y)\big)^2}{|x-y|^{N+2s}}dxdy-2\int_{\O^+\times\O^-}\frac{\big( u(x)- u(y)\big)^2}{|x-y|^{N+2s}}dxdy\\
%%
%&-2\int_{\O}u^2(x)\int_{\R^N\setminus\O}\frac{dy}{|x-y|^{N+2s}}dx\\
%%
%&+4\int_{\O^+_1\times\O^+_2}\frac{(u(x)+u(y))^2-(u(x)-u(y))^2}{|x-y|^{N+2s}}dxdy.
\end{align}
Similarly we have 
\begin{align}\label{eq:int-omega-}
\int_{\O^-\times\O^-} \frac{\big( |u(x)|- |u(y)|\big)^2}{|x-y|^{N+2s}}dxdy &= \int_{\O^-\times\O^-} \frac{\big( u(x)- u(y)\big)^2}{|x-y|^{N+2s}}dxdy+4\int_{\O^-_1\times\O^-_2}\frac{u(x)u(y)}{|x-y|^{N+2s}}dxdy\nonumber\\
& = \int_{\O^-\times\O^-} \frac{\big( u(x)- u(y)\big)^2}{|x-y|^{N+2s}}dxdy+4\int_{\O^+_1\times\O^+_2}\frac{u(x)u(y)}{|x-y|^{N+2s}}dxdy.
\end{align}
Now using that $\O^-_j = r_N(\O^+_j)$, $j=1,2$ we get 
\begin{align}\label{eq:int-omega+-}
\int_{\O^-\times\O^+} &\frac{\big(  |u(x)|-|u(y)|\big)^2}{|x-y|^{N+2s}}dxdy = \int_{\O^-\times\O^+} \frac{\big( u(x)- u(y)\big)^2}{|x-y|^{N+2s}}dxdy+\int_{\O^-_2\times\O^+_1}\frac{(u(x)+u(y))^2-(u(x)-u(y))^2}{|x-y|^{N+2s}}dxdy\nonumber\\
&\qquad\qquad\qquad\qquad\qquad\qquad\qquad+\int_{\O^-_1\times\O^+_2}\frac{(u(x)+u(y))^2-(u(x)-u(y))^2}{|x-y|^{N+2s}}dxdy\nonumber\\
&=\int_{\O^-\times\O^+} \frac{\big( u(x)- u(y)\big)^2}{|x-y|^{N+2s}}dxdy+2\int_{\O^-_2\times\O^+_1}\frac{u(x)u(y)}{|x-y|^{N+2s}}dxdy+2\int_{\O^-_1\times\O^+_2}\frac{u(x)u(y)}{|x-y|^{N+2s}}dxdy\nonumber\\
&=\int_{\O^-\times\O^+} \frac{\big( u(x)- u(y)\big)^2}{|x-y|^{N+2s}}dxdy -4\int_{\O^+_1\times\O^+_2}\frac{u(x)u(y)}{|r_{N,0}(x)-y|^{N+2s}}dxdy.
\end{align}
Summing up \eqref{eq:int-omega+}, \eqref{eq:int-omega-} and \eqref{eq:int-omega+-}, and taking into account \eqref{eq:semi-norm-ov u}, we obtain
\begin{align}\label{eq:semi-norm- ov u-2}
&\frac{2}{C_{N,s}}\cE_s(\ov u,\ov u)-4\int_{\O^+} u^2(x)\int_{\R^N\setminus\O}\frac{dy}{|x-y|^{N+2s}}\,dx = \int_{\R^N\times\R^N}\frac{(u(x)-u(y))^2}{|x-y|^{N+2s}}dxdy-2\int_{\O}u^2(x)\int_{\R^N\setminus\O}\frac{dy}{|x-y|^{N+2s}}\nonumber\\
&+8\int_{\O^+_1\times\O^+_2}\frac{u(x)u(y)}{|x-y|^{N+2s}}dxdy-8\int_{\O^+_2\times\O^+_1}\frac{u(x)u(y)}{|r_{N,0}(x)-y|^{N+2s}}dxdy.
\end{align}
By a change of variable it is clear that 
\[2\int_{\O}u^2(x)\int_{\R^N\setminus\O}\frac{dy}{|x-y|^{N+2s}} dx= 4\int_{\O^+} u^2(x)\int_{\R^N\setminus\O}\frac{dy}{|x-y|^{N+2s}}\,dx.\]
Putting that into \eqref{eq:semi-norm- ov u-2} gives
\begin{align}\label{eq:semi-norm ov u-3}
\frac{2}{C_{N,s}}\cE_s(\ov u,\ov u) &=  \int_{\R^N\times\R^N}\frac{(u(x)-u(y))^2}{|x-y|^{N+2s}}dxdy+8\int_{\O^+_1\times\O^+_2}u(x)u(y)\Big[\frac{1}{|x-y|^{N+2s}}-\frac{1}{|r_{N,0}(x)-y|^{N+2s}}\Big]dxdy.
\end{align}
Now since $\ov u\circ r_N = -\ov u$, it follows from the variational characterization of $\lambda^-_{1,p}(\O)$ in \eqref{defi-lambda} with \eqref{eq:semi-norm ov u-3} and \eqref{eq:l2-norm-ov u} that 
\begin{align*}
\lambda^-_{1,p}(\O)&\leq \cE_s(\ov u,\ov u)= \cE_s( u, u)+4C_{N,s}\int_{\O^+_1\times\O^+_2}u(x)u(y)\Big[\frac{1}{|x-y|^{N+2s}}-\frac{1}{|r_{N,0}(x)-y|^{N+2s}}\Big]dxdy\\
&=\lambda^-_{1,p}(\O)+4C_{N,s}\int_{\O^+_1\times\O^+_2}u(x)u(y)\Big[\frac{1}{|x-y|^{N+2s}}-\frac{1}{|r_{N,0}(x)-y|^{N+2s}}\Big]dxdy.
\end{align*}
That is 
\[
0\leq\int_{\O^+_1\times\O^+_2}u(x)u(y)\Big[\frac{1}{|x-y|^{N+2s}}-\frac{1}{|r_{N,0}(x)-y|^{N+2s}}\Big]dxdy\leq 0.
\]
Whence $u \equiv 0$ in $\O^+_2$ and therefore $u\geq 0$ in $\O^+$. This is in contradiction with the hypothesis. It follows that $u$ does not change sign in $\O^+$ and, without loss of generality, we may assume $u\geq 0$ in $\O^+$. By the strong maximum principle \cite[Corollary $3.4$]{FJ15} we have $u>0$ in $\O^+$.\\
For the additional statement let $p=2$ and let $u, v$ be two normalized minimizers for $\lambda^-_{1,2}(\Omega)$. Assume further they satisfy the sign property in Theorem \ref{main-thm2}, i.e. they are of one sign in $\Omega\cap H_{N,0}$. Then, if $u-v$ is not identically zero, it must change sign in $\Omega\cap H_{N,0}$. Indeed, if not, we may assume  $u-v>0$ in $\O\cap H_{N,0}$ by \cite[Corollary $3.4$]{FJ15}. Therefore $1=\int_{\O}u^2dx = 2\int_{\O\cap H_{N,0}}u^2dx>2\int_{\O\cap H_{N,0}}v^2dx=1$ a contradiction. Note that if $u\not\equiv v$, then also $(u-v)/\|u-v\|_{L^2(\Omega)}$ is a minimizer. But by the above argument, $u-v$ cannot change sign in $\O\cap H_{N,0}$. Whence $u\equiv v$ as claimed.
\end{proof}

\textbf{Acknowledgements.} This work is supported by DAAD and BMBF (Germany) within the project 57385104. The authors would like to thank Mouhamed Moustapha Fall and Tobias Weth for helpful discussions.

\bibliographystyle{amsplain}

\end{document}